\title{Revisiting $k$-tuple dominating sets with emphasis on small values of $k$}
\author{Babak Samadi$^a$\thanks{Corresponding author}, Nasrin Soltankhah$^a$ and Doost Ali Mojdeh$^b$\\[0.5cm]
$^{a}$Department of Mathematics, Faculty of Mathematical Sciences, Alzahra University,\\ Tehran, Iran\\
{\it b.samadi@alzahra.ac.ir}\\
{\it soltan@alzahra.ac.ir}\\[0.2cm]
$^b$Department of Mathematics, University of Mazandaran,\\ Babolsar, Iran\\
{\it damojdeh@umz.ac.ir}
}
\date{}
\newtheorem{theorem}{Theorem}[section]
\newtheorem{corollary}[theorem]{Corollary}
\newtheorem{proposition}[theorem]{Proposition}
\theoremstyle{definition}
\theoremstyle{remark}
\newtheorem{rem}[theorem]{Remark}
\begin{document}

\maketitle

\begin{abstract}
For any graph $G$ of order $n$ with degree sequence $d_{1}\geq\cdots\geq d_{n}$, we define the double Slater number $s\ell_{\times2}(G)$ as the smallest integer $t$ such that $t+d_{1}+\cdots+d_{t-e}\geq2n-p$ in which $e$ and $p$ are the number of end-vertices and penultimate vertices of $G$, respectively. We show that $\gamma_{\times2}(G)\geq s\ell_{\times2}(G)$, where $\gamma_{\times2}(G)$ is the well-known double domination number of a graph $G$ with no isolated vertices. We prove that the problem of deciding whether the equality holds for a given graph is NP-complete even when restricted to $4$-partite graphs. We also prove that the problem of computing $\gamma_{\times2}(G)$ in NP-hard even for comparability graphs of diameter two. Some results concerning these two parameters are given in this paper improving and generalizing some earlier results on double domination in graphs. We give an upper bound on the $k$-tuple domatic number of graphs with characterization of all graphs attaining the bound. Finally, we characterize the family of all full graphs, leading to a solution to an open problem given in a paper by Cockayne and Hedetniemi ($1977$).

\noindent \ \ \   
\end{abstract}
{\bf Keywords:} Double domination number, double Slater number, NP-complete, tree, $k$-tuple domatic partition, full graphs.\vspace{1mm}\\ 
{\bf MSC 2010:} 05C69.


\section{Introduction and preliminaries}

Throughout this paper, we consider $G$ as a finite simple graph with vertex set $V(G)$ and edge set $E(G)$. We use \cite{West} as a reference for terminology and notation which are not explicitly defined here. The {\em open neighborhood} of a vertex $v$ is denoted by $N(v)$, and its {\em closed neighborhood} is $N[v]=N(v)\cup \{v\}$. We denote the {\em degree} of vertex $v$ by $\deg(v)$, and let $\deg_{S}(v)=|N(v)\cap S|$ in which $S\subseteq V(G)$. The {\em minimum} and {\em maximum degrees} of $G$ are denoted by $\delta(G)$ and $\Delta(G)$, respectively. An {\em end-vertex} is a vertex of degree one and a {\em penultimate vertex} is a vertex adjacent to an end-vertex (they are called {\em leaf} and {\em support vertex} in the case of trees). Given the subsets $A,B\subseteq V(G)$, by $[A,B]$ we mean the set of all edges with one end point in $A$ and the other in $B$. Finally, for a given set $S\subseteq V(G)$, by $G[S]$ we represent the subgraph induced by $S$ in $G$.

A {\em transitive orientation} of a graph $G$ is an orientation $D$ such that whenever $(x,y)$ and $(y,z)$ are arcs in $D$, also there exists an edge $xz$ in $G$ that is oriented from $x$ to $z$ in $D$. A graph $G$ is a {\em comparability graph} if it has a transitive orientation. It is well-known that the comparability graphs are perfect (see \cite{West} for example).

A set $S\subseteq V(G)$ is a {\em dominating set} if each vertex in $V(G)\setminus S$ has at least one neighbor in $S$. The {\em domination number} $\gamma(G)$ is the minimum cardinality of a dominating set in $G$. For more information about domination and its related parameters the reader can consult \cite{hhh} and \cite{hhs}.  

Slater \cite{s} showed that the domination number of a graph of order $n$ with non-increasing degree sequence $d_{1}\geq\cdots\geq d_{n}$ can be bound from below by the smallest integer $t$ such that $t$ added to the sum of the first $t$ terms of the above-mentioned sequence is at least $n$. This parameter was first called the {\em Slater number} and denoted by $s\ell(G)$ in \cite{dhh}. This parameter and its properties have been investigated in \cite{ghr} and \cite{gr}. 

A vertex subset $S$ of a graph $G$ with $\delta(G)\geq k-1$ is said a {\em $k$-tuple dominating set} if $|N[v]\cap S|\geq k$ for each vertex $v$ of $G$. The {\em $k$-tuple domination number} $\gamma_{\times k}(G)$ of the graph $G$ is the minimum cardinality of a $k$-tuple dominating set in $G$. A $k$-tuple dominating set in $G$ of the minimum cardinality is called a {\em $\gamma_{\times k}(G)$-set}. This parameter was introduced by Harary and Haynes in \cite{hh}. For the especial case $k=2$, it is common to write \textit{double dominating set} and \textit{double domination number} for the resulting set and graph parameter. Note that we emphasis on the small values of $k$ for various reasons given in this paper.

For a given graph $G$ of order $n$ with $e$ end-vertices, $p$ penultimate vertices and non-increasing degree sequence $d_{1}\geq\cdots\geq d_{n}$, we define the {\em double Slater number} $s\ell_{\times2}(G)$ as follows:
$$s\ell_{\times2}(G)=\mbox{min}\{t\mid t+d_{1}+\cdots+d_{t-e}\geq2n-p\}.$$

A vertex partition of a graph $G$ with $\delta(G)\geq k-1$ is said to be a $k$-tuple domatic partition of $G$ if each partite set is a $k$-tuple dominating set in $G$. The $k$-tuple domatic number $d_{\times k}(G)$ is the maximum cardinality taken over all $k$-tuple domatic partitions of $G$. A $k$-tuple dmatic partition of $G$ of maximum cardinality is called a $d_{\times k}(G)$-partition. The study of this kind of partitions was first begun by Harary and Haynes in \cite{hh2} as a generalization of the well-known domatic partition in graphs (\cite{ch}). When $k=1$, $d_{\times k}(G)$ and ``$k$-tuple domatic partition" are simply written as $d(G)$ and ``domatic partition", respectively. 

This paper is organized as follows. We first present some properties of the Slater number $s\ell_{\times2}(G)$ of graphs $G$. In particular, we observe that $s\ell_{\times2}(G)$ is a lower bound on $\gamma_{\times2}(G)$ for all graphs $G$ with no isolated vertices. We than prove that the problem deciding whether the equality holds for a given graph $G$ is NP-complete (even for $4$-partite graphs) despite that fact that $s\ell_{\times2}(G)$ can be computed in linear-time. We also prove that the problem of computing $\gamma_{\times k}$ for $k\geq2$ is NP-hard for comparability graphs of diameter two. Some bounds on the double domination number in this paper improve the main results in \cite{c} and \cite{hj}. An upper bound on the $k$-tuple domatic number of graphs with characterization of the extremal graphs for the bound is given in this paper. Finally, we give a complete characterization of full graphs (those graphs $G$ for which $d(G)=d_{\times1}(G)=\delta(G)+1$), solving an open problem given in \cite{ch}.

Note that for some other domination parameters, lower bounds similar to the double Slater number can be obtained. The reader can consult \cite{aabd}, \cite{dh} and \cite{gr} for more pieces of information about them.  
 

\section{Preliminary properties of double Slater number}

In this section, we prove some results on $\gamma_{\times2}(G)$ and $s\ell_{\times2}(G)$ and discuss their relationship for general graphs $G$. Let $G$ be a graph of order $n$ with $\delta(G)\geq2$ and degree sequence $d_{1}\geq\cdots\geq d_{n}$. Let $A$ be a $\gamma_{\times2}(G)$-set. We then have
\begin{equation*}\label{E0}
\begin{array}{lcl}
|A|+\sum_{i=1}^{|A|}d_{i}&\geq&|A|+\sum_{v\in A}\deg(v)=|A|+\sum_{v\in A}|N(v)\cap A|+\sum_{v\in A}|N(v)\cap (V(G)\setminus A)|\\
&=&|A|+|A|+2(n-|A|)=2n.
 \end{array}
\end{equation*}
Therefore,
\begin{equation}\label{E-1} 
\gamma_{\times2}(G)\geq s\ell_{\times2}(G)=\mbox{min}\{t\mid t+d_{1}+\cdots+d_{t}\geq2n\}.
\end{equation}

In spite of the fact that the lower bound given in (\ref{E-1}) and its proof are simple, the family of graphs for which the equality holds cannot be characterized in polynomial-time (even if we restrict the problem to some special families of graphs) unless P=NP. We will prove this later in Theorem \ref{Complexity}. 

In the next two propositions, we discuss some properties of the double Slater number. 

\begin{proposition}\label{T-1}
Let $G$ be a graph of order $n$ with minimum degree $\delta\geq2$ and maximum degree $\Delta$. Then, 
$$\lceil\frac{2n}{1+\Delta}\rceil\leq s\ell_{\times2}(G)\leq \lceil\frac{2n}{1+\delta}\rceil.$$
\end{proposition}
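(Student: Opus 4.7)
The proof should be short and direct, following straight from the definition. Because $\delta(G)\ge 2$ there are no end-vertices and hence no penultimate vertices, i.e. $e=p=0$, so the defining inequality collapses to the form already displayed in (\ref{E-1}):
$$s\ell_{\times2}(G)=\min\{t\mid t+d_{1}+\cdots+d_{t}\ge 2n\}.$$
I would state this reduction first, then handle the two inequalities separately.

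For the upper bound, my plan is to exhibit a concrete integer $t$ that satisfies the defining inequality. Set $t_{0}=\lceil 2n/(1+\delta)\rceil$. Since the degree sequence is nonincreasing and bounded below by $\delta$, every term among $d_{1},\dots,d_{t_{0}}$ is at least $\delta$, so
$$t_{0}+d_{1}+\cdots+d_{t_{0}}\ge t_{0}+t_{0}\delta=t_{0}(1+\delta)\ge 2n.$$
By the minimality in the definition of $s\ell_{\times2}(G)$, this gives $s\ell_{\times2}(G)\le t_{0}$.

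For the lower bound, I would run the symmetric argument on whatever $t^{\ast}=s\ell_{\times2}(G)$ happens to be. By definition, $t^{\ast}+d_{1}+\cdots+d_{t^{\ast}}\ge 2n$, and since each $d_{i}\le\Delta$,
$$2n\le t^{\ast}+d_{1}+\cdots+d_{t^{\ast}}\le t^{\ast}+t^{\ast}\Delta=t^{\ast}(1+\Delta).$$
Rearranging gives $t^{\ast}\ge 2n/(1+\Delta)$, and because $t^{\ast}$ is an integer we obtain $s\ell_{\times2}(G)\ge\lceil 2n/(1+\Delta)\rceil$.

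There is no real obstacle here; the only point that needs mild care is the reduction to the cleaner form of the defining inequality under the hypothesis $\delta\ge 2$, so that the sums $d_{1}+\cdots+d_{t}$ really have $t$ (not $t-e$) terms. Everything after that is a one-line estimate on each side using the extremal degree.
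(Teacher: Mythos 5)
Your proof is correct and follows essentially the same route as the paper: the lower bound by applying $d_i\le\Delta$ to the defining inequality at $t^\ast=s\ell_{\times2}(G)$, and the upper bound by verifying the defining inequality at $t_0=\lceil 2n/(1+\delta)\rceil$ using $d_i\ge\delta$. The explicit remark that $\delta\ge 2$ forces $e=p=0$ (so the sum has $t$ terms) is a small clarification the paper leaves implicit, but otherwise the two arguments coincide.
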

\begin{proof}
Let $t=s\ell_{\times2}(G)$. Then, $t+\Delta t\geq t+d_{1}+\cdots+d_{t}\geq2n$. So, $s\ell_{\times2}(G)=t\geq \lceil2n/(1+\Delta)\rceil$. On the other hand,
$$\lceil\frac{2n}{1+\delta}\rceil+d_{1}+\cdots+d_{\lceil\frac{2n}{1+\delta}\rceil}\geq \lceil\frac{2n}{1+\delta}\rceil+\delta \lceil\frac{2n}{1+\delta}\rceil=(1+\delta)\lceil\frac{2n}{1+\delta}\rceil\geq2n.$$
Therefore, $s\ell_{\times2}(G)\leq \lceil2n/(1+\delta)\rceil$.
\end{proof}

As an immediate consequence of Proposition \ref{T-1}, we have $s\ell_{\times2}(G)=\lceil2n/(1+r)\rceil$ for any $r$-regular graph $G$.

\begin{proposition}\label{P1}
Let $G$ be a graph of order $n$ and size $m$ with minimum degree $\delta\geq2$. Then, the following statements hold.\vspace{1.25mm}\\
\emph{(}i\emph{)} $1\leq s\ell_{\times2}(G)-s\ell(G)\leq\lceil n/(\delta+1)\rceil$. These bounds are sharp.\vspace{1.25mm}\\
\emph{(}ii\emph{)} $2\leq s\ell_{\times2}(G)\leq n$. Moreover, $s\ell_{\times2}(G)=2$ if and only if $G$ has at least two vertices of degree $n-1$, and $s\ell_{\times2}(G)=n$ if and only if $m\leq\lfloor(n+\delta)/2\rfloor$.\vspace{1.25mm}\\
\emph{(}iii\emph{)} $s\ell_{\times2}(G)=2n/(1+\Delta)$ if and only if $2n\equiv0$ \emph{(}mod $1+\Delta$\emph{)} and $d_{2n/(1+\Delta)}=\Delta$.
\end{proposition}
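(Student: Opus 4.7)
The plan is to treat the three parts separately, in each case comparing the defining threshold $t+d_{1}+\cdots+d_{t}\geq 2n$ against easier inequalities coming from $\delta$, $\Delta$, or the handshake identity $d_{1}+\cdots+d_{n}=2m$. All three arguments reduce to elementary arithmetic on the non-increasing degree sequence, and the main care point is to handle the boundary cases where the candidate index threatens to exceed $n$.

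For part (i), I would first establish $s\ell_{\times2}(G)\geq s\ell(G)+1$ by contradiction. Writing $s=s\ell(G)$, if we had $s+d_{1}+\cdots+d_{s}\geq 2n$, then subtracting $d_{s}\leq n-1$ would give $s-1+d_{1}+\cdots+d_{s-1}\geq n$, contradicting the minimality of $s\ell(G)$. For the upper bound I would take $t=s\ell(G)+\lceil n/(\delta+1)\rceil$ and note that the $\lceil n/(\delta+1)\rceil$ additional degrees appended to $d_{1},\ldots,d_{s\ell(G)}$ each contribute at least $\delta$, so the total is at least $n+(1+\delta)\lceil n/(\delta+1)\rceil\geq 2n$; if $t$ would exceed $n$ I fall back on the trivial bound $s\ell_{\times2}(G)\leq n$ established in (ii). Sharpness of the lower inequality is witnessed by $K_{n}$, where $s\ell=1$ and $s\ell_{\times2}=2$, and of the upper inequality by $r$-regular graphs with $(r+1)\mid n$, for example the cycles $C_{3k}$.

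For part (ii), the lower bound follows since $t=1$ would force $d_{1}\geq 2n-1$, impossible in a simple graph, and the upper bound follows from $n+\sum_{i}d_{i}=n+2m\geq n+n\delta\geq 3n$. The characterization $s\ell_{\times2}(G)=2$ reduces to $d_{1}+d_{2}\geq 2(n-1)$, which combined with $d_{1},d_{2}\leq n-1$ forces $d_{1}=d_{2}=n-1$; the converse is immediate. For $s\ell_{\times2}(G)=n$, I would analyze when the threshold $t=n-1$ fails: the condition $(n-1)+d_{1}+\cdots+d_{n-1}<2n$ rewrites, via $d_{1}+\cdots+d_{n-1}=2m-d_{n}$ and $d_{n}=\delta$, as $2m\leq n+\delta$, equivalently $m\leq\lfloor(n+\delta)/2\rfloor$; the converse direction runs through the same chain of equalities.

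For part (iii), setting $t=2n/(1+\Delta)$ and using $d_{i}\leq\Delta$ yields $t+d_{1}+\cdots+d_{t}\leq t(1+\Delta)=2n$, so the assumption $s\ell_{\times2}(G)=t$ forces equality throughout, giving both integrality of $t$ and $d_{t}=\Delta$. Conversely, if $(1+\Delta)\mid 2n$ and $d_{2n/(1+\Delta)}=\Delta$ then monotonicity yields $d_{1}=\cdots=d_{t}=\Delta$ and hence $s\ell_{\times2}(G)\leq t$, while the matching lower bound is supplied by Proposition~\ref{T-1}. I do not anticipate a genuine obstacle in any part; the only delicate step is verifying that the degree-sequence inequalities in (ii) and (iii) actually force the equality cases claimed, which the monotonicity of $d_{1}\geq\cdots\geq d_{n}$ makes tight.
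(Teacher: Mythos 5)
Your proposal is correct and follows essentially the same route as the paper: all three parts are handled by the same elementary manipulations of the threshold inequality $t+d_1+\cdots+d_t\geq 2n$ using $\delta$, $\Delta$, the handshake identity, and monotonicity of the degree sequence, with only cosmetic differences (a contradiction framing for the lower bound in (i), the sharpness example $C_{3k}$ in place of the paper's $C_n$ with $n\equiv 0,2\pmod 3$, and an explicit check of the boundary case $t>n$ that the paper leaves implicit).
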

\begin{proof}
($i$) Let $t=s\ell_{\times2}(G)$. Then, $t+d_{1}+\cdots+d_{t}\geq2n$. So, $t-1+d_{1}+\cdots+d_{t-1}\geq2n-1-d_{t}\geq n$. Therefore, $s\ell(G)\leq s\ell_{\times2}(G)-1$ by the definition of $s\ell(G)$. We now let $t'=s\ell(G)$. We have
\begin{equation*}
\begin{array}{lcl}
t'+\lceil n/(\delta+1)\rceil&+&d_{1}+\cdots+d_{t'}+d_{t'+1}+\cdots+d_{t'+\lceil n/(\delta+1)\rceil}\\
&\geq& n+\lceil n/(\delta+1)\rceil+\delta\lceil n/(\delta+1)\rceil\geq2n.
\end{array}
\end{equation*}
Thus, $s\ell_{\times2}(G)\leq s\ell(G)+\lceil n/(\delta+1)\rceil$.

The lower bound is sharp for the complete graph $K_{n}$ when $n\geq3$. Moreover, it is easy to see that the upper bound is sharp for the cycle $C_{n}$ in which $n\equiv0$ or $2$ (mod $3$).

($ii$) We only prove the second ``if and only if" part, as the other parts can be verified easily. Let $s\ell_{\times2}(G)=n$. This implies that $d_{1}+\cdots+d_{n-1}\leq n$. So, $2m\leq n+d_{n}=n+\delta$ which results in $m\leq\lfloor(n+\delta)/2\rfloor$. Conversely, let $m\leq\lfloor(n+\delta)/2\rfloor$. Then, 
$$n-1+d_{1}+\cdots+d_{n-1}=n-1+2m-d_{n}=n-1+2m-\delta\leq n-1+2\lfloor(n+\delta)/2\rfloor-\delta<2n.$$
Therefore, $s\ell_{\times2}(G)=n$.

($iii$) We have $s\ell_{\times2}(G)\geq2n/(1+\Delta)$ by Proposition \ref{T-1}. Suppose that the equality holds. Obviously, $2n\equiv0$ (mod $1+\Delta$). On the other hand, 
$$2n\leq2n/(1+\Delta)+d_{1}+\cdots+d_{2n/(1+\Delta)}\leq2n/(1+\Delta)+2n\Delta/(1+\Delta)=2n$$
implies that $d_{2n/(1+\Delta)}=\Delta$. Conversely, we deduce that $2n/(1+\Delta)+d_{1}+\cdots+d_{2n/(1+\Delta)}=2n$ when $2n\equiv0$ (mod $1+\Delta$) and $d_{2n/(1+\Delta)}=\Delta$.
\end{proof}

\begin{rem}
Harary and Haynes \cite{hh} proved that $\gamma_{\times2}(G)\geq2n/(1+\Delta)$ for all graphs $G$ with no isolated vertices. So, the lower bound in Proposition \ref{T-1} gives us an improvement of the result in \cite{hh}. In what follows, we show that the difference between $s\ell_{\times2}(G)$ and $2n/(1+\Delta)$ can be arbitrarily large. In fact, we claim that for any integer $b\geq1$, the difference between these two graph parameters can be as large as $b$. To see this, we begin with the star $K_{1,n-1}$ with the central vertex $u$ in which $n=4b+4$. Let $G$ be obtained from $K_{1,n-1}$ by constructing a path on the vertices in $N_{K_{1,n-1}}(u)$. Note that all vertices in $N_{K_{1,n-1}}(u)$ are of degree three except for two vertices of degree two. Clearly, $2n/(1+\Delta)=2$. We have $d_{1}=n-1$ and $d_{i}=3$ for all $2\leq i\leq4b+2$. We therefore have,
$$b+2+d_{1}+\cdots+d_{b+2}=2n.$$
Thus, $s\ell_{\times2}(G)=b+2$. Therefore $s\ell_{\times2}(G)-2n/(1+\Delta)=b$, as desired.
\end{rem}


\section{Complexity results}

Note that the non-increasing degree sequence of a graph $G$ of order $n$ can be made in linear-time by the counting sort algorithm. Therefore sum of the first $t$ degrees, and hence $s\ell_{\times2}(G)$, can be computed in linear-time. In spite of this fact, the following theorem shows that the problem of determining whether the lower bound (\ref{E-1}) holds with equality for a given graph $G$ is NP-complete even if we restrict our attention to some special families of graphs.

\begin{theorem}\label{Complexity}
The problem of deciding whether $\gamma_{\times2}(G)=s\ell_{\times2}(G)$ for a given graph $G$ is NP-complete even when restricted to 4-partite graphs. 
\end{theorem}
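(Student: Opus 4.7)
The plan is to verify NP membership and then reduce from a known NP-complete problem to a 4-partite instance.

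Membership in NP is immediate: since $s\ell_{\times 2}(G)$ is linear-time computable from the non-increasing degree sequence, one can take a double dominating set $S$ of $G$ as a certificate and check in polynomial time that $|N[v]\cap S|\ge 2$ for every $v$ and that $|S|\le s\ell_{\times 2}(G)$. Combined with the inequality $\gamma_{\times 2}(G)\ge s\ell_{\times 2}(G)$ established in (\ref{E-1}), this settles membership.

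For hardness, I would reduce from a problem that is already NP-complete on a restricted graph class, for instance the decision version of $\gamma_{\times 2}$ on bipartite or split graphs, or alternatively from 3-SAT. Given an instance $(G,K)$ of the source problem, I would construct a 4-partite graph $G'$ by attaching to $G$ a gadget consisting of (a) a small constant number of ``high-degree'' padding vertices placed in a fresh color class and adjacent to prescribed portions of $G$, (b) a controlled number of end-vertices and penultimate vertices placed in the two remaining color classes so as to tune the parameters $e$ and $p$ that appear in the definition of $s\ell_{\times 2}$. The gadget is designed so that the first several terms of the degree sequence of $G'$ are occupied by the padding vertices of known, gadget-controlled degrees; this makes $s\ell_{\times 2}(G')$ an explicit function of $|V(G)|$, $K$, and the gadget parameters, independent of the internal structure of $G$.

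The heart of the argument is then to show that $\gamma_{\times 2}(G')=s\ell_{\times 2}(G')$ precisely when the source instance is a YES-instance. Any optimum double dominating set of $G'$ is forced to contain certain gadget vertices (for instance all penultimate vertices of the added pendants, since each end-vertex must be doubly dominated by itself and its unique neighbor), which together contribute a constant amount; after subtracting this contribution, what remains must induce a double dominating set of $G$ of size at most $K$. Conversely, any such double dominating set of $G$ extends to a double dominating set of $G'$ whose total size matches $s\ell_{\times 2}(G')$.

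The main obstacle will be calibrating the gadget precisely. The degree profile and the numbers of end-vertices and penultimate vertices must be chosen simultaneously so that (i) $G'$ remains 4-colorable, (ii) $s\ell_{\times 2}(G')$ collapses to an explicit integer whose value does not depend on which vertices of $G$ are selected by any particular double dominating set, and (iii) the bound $\gamma_{\times 2}(G')\ge s\ell_{\times 2}(G')$ becomes tight exactly when $\gamma_{\times 2}(G)\le K$. Condition (ii) in particular forces the gadget's contribution to the partial degree sum to hit $2|V(G')|-p$ precisely at the target index, which leaves little slack and will be the technically delicate part of the reduction.
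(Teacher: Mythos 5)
Your NP-membership argument is fine: since $s\ell_{\times2}(G)$ is polynomial-time computable and $\gamma_{\times2}(G)\geq s\ell_{\times2}(G)$ always holds, a double dominating set of size at most $s\ell_{\times2}(G)$ is a valid certificate. The overall shape of your hardness plan (build a graph whose degree sequence pins $s\ell_{\times2}$ to an explicit value, and arrange that $\gamma_{\times2}$ attains that value exactly on YES-instances) is also the shape of the paper's argument. But the proposal has a genuine gap: the entire technical content of the proof is the gadget, and you explicitly defer its construction (``the main obstacle will be calibrating the gadget precisely''). What the paper actually does is reduce from $3$-SAT (with each variable in at most five clauses): for each variable $u_i$ it builds a gadget $H_i$ of order $5a^2$ --- a triangle on $q_i,q_i',q_i''$ together with $5a^2-3$ independent vertices joined to all three --- and a clause vertex $c_j$ adjacent to $q_i''$ and to $q_i$ or $q_i'$. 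The padding makes $\deg(q_i),\deg(q_i'),\deg(q_i'')\approx 5a^2$, so the first $2a$ terms of the degree sequence already sum past $2n=2(5a^3+b)$ while $2a-1$ terms do not, giving $s\ell_{\times2}(G)=2a$; simultaneously each $H_i$ structurally forces at least two vertices into any double dominating set, so $\gamma_{\times2}(G)\geq 2a$ with equality precisely when the formula is satisfiable. Note also that the paper's construction has $\delta\geq2$, so the $e$ and $p$ corrections you plan to ``tune'' never enter; introducing pendant vertices only complicates the count, since an end-vertex and its neighbor are both forced into every double dominating set.

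A second, more structural problem concerns your first-choice source problem, the threshold decision version ``$\gamma_{\times2}(G)\leq K$''. If your gadget yields $\gamma_{\times2}(G')=C+\gamma_{\times2}(G)$ for a gadget constant $C$ and you calibrate the degree sequence so that $s\ell_{\times2}(G')=C+K$, then on any instance with $\gamma_{\times2}(G)<K$ you would obtain $\gamma_{\times2}(G')<s\ell_{\times2}(G')$, contradicting inequality (\ref{E-1}). Since $s\ell_{\times2}(G')$ is computed from the degree sequence alone and cannot ``see'' $\gamma_{\times2}(G)$, no such calibration exists: the target value of $s\ell_{\times2}(G')$ must coincide with a lower bound on $\gamma_{\times2}(G')$ that is \emph{structurally forced} by the construction itself, not with an arbitrary input threshold $K$. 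This is exactly why the paper reduces from $3$-SAT, where the value $2a$ is both the Slater-type bound and the unavoidable minimum of two vertices per variable gadget. Your fallback ``or alternatively from 3-SAT'' is the viable route, but as written the proposal neither recognizes this obstruction nor supplies the construction that circumvents it.
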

\begin{proof}
We describe a polynomial transformation of the well-known $3$-SAT problem to our problem. Consider an arbitrary instance of the $3$-SAT problem, given by $U=\{u_{1},\cdots,u_{a}\}$ of variables (with the set of complements $U'=\{u_{1}',\cdots,u_{a}'\}$) and a collection $C=\{C_{1},\cdots,C_{b}\}$ of three-variable clauses over $U\cup U'$. Note that the $3$-SAT problem remains NP-complete even if for each $u_{i}\in U$, there are at most five clauses in $C$ that contain either $u_{i}$ or $u_{i}'$ (see \cite{gj}). 

For every variable $u_{i}$, we associate a graph $H_{i}$ of order $5a^{2}$ constructed by a copy of the triangle $K^{i}_{3}$ on vertices $q_{i}$, $q_{i}'$ and $q_{i}''$, by adding $5a^{2}-3$ independent vertices so that each of them is adjacent to all three vertices of $K^{i}_{3}$. Corresponding to each three-variable clause $C_{j}$, we create a vertex $c_{j}$. The construction of the graph $G$ is completed by adding the edges $c_{j}q_{i}''$ and $c_{j}y$, where $y=q_{i}$ or $q_{i}'$ when $u_{i}$ or $u_{i}'$ belongs to $C_{j}$, respectively. Clearly, $G$ is a graph of order $n=5a^{3}+b$ and $\Delta(G)\in\big{\{}\deg(v)\mid v\in W\cup\{q_{1}'',\cdots,q_{a}''\}\big{\}}$, in which $W=\{q_{1},\cdots,q_{a}\}\cup\{q_{1}',\cdots,q_{a}'\}$. It is obvious from the construction that $G$ is $4$-partite (see Figure \ref{Fig1}). Furthermore, $s\ell_{\times2}(G)=\mbox{min}\{t\mid t+d_{1}+\cdots+d_{t}\geq2n\}$ since $\delta(G)\geq2$.

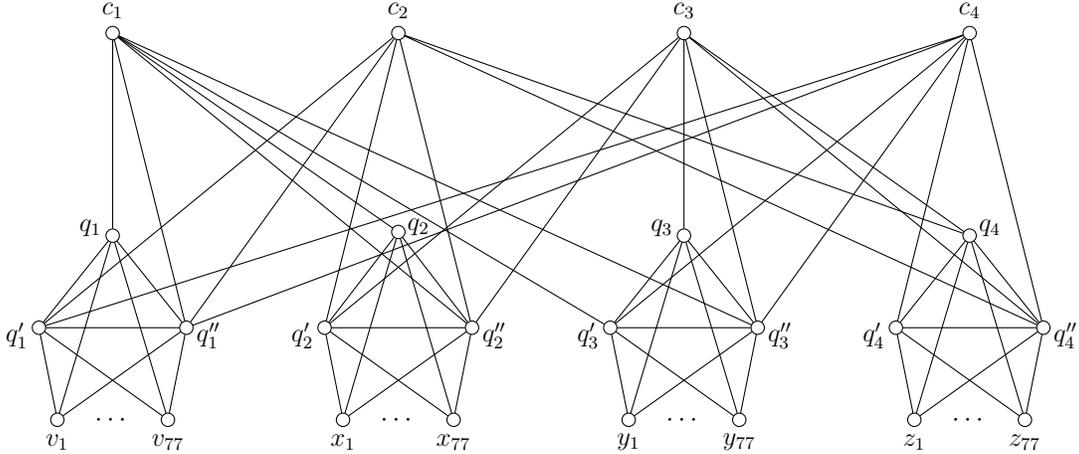
\begin{figure}[ht]
 \centering
\begin{tikzpicture}[scale=.49, transform shape]
\node [draw, shape=circle] (q1) at (0,-0.5) {};
\node [draw, shape=circle] (q1') at (-2,-3) {};
\node [draw, shape=circle] (q1'') at (2,-3) {};
\draw (q1)--(q1')--(q1'')--(q1);
\node [scale=1.7] at (-0.6,-0.3) {\large $q_{1}$};
\node [scale=1.7] at (-2.6,-3.2) {\large $q_{1}'$};
\node [scale=1.7] at (2.6,-3.2) {\large $q_{1}''$};
\node [draw, shape=circle] (w1) at (-1.5,-5.5) {};
\node [draw, shape=circle] (w1') at (1.5,-5.5) {};
\node [scale=1.7] at (0,-5.5) {\large $\cdots$};
\draw (w1)--(q1)--(w1');
\draw (w1)--(q1')--(w1');
\draw (w1)--(q1'')--(w1');
\node [scale=1.7] at (-1.5,-6.1) {\large $v_{1}$};
\node [scale=1.7] at (1.5,-6.1) {\large $v_{77}$};


\node [draw, shape=circle] (q2) at (7.75,-0.4) {};
\node [draw, shape=circle] (q2') at (5.75,-3) {};
\node [draw, shape=circle] (q2'') at (9.75,-3) {};
\draw (q2)--(q2')--(q2'')--(q2);
\node [scale=1.7] at (8.3,-0.25) {\large $q_{2}$};
\node [scale=1.7] at (5.15,-3.2) {\large $q_{2}'$};
\node [scale=1.7] at (10.35,-3.2) {\large $q_{2}''$};
\node [draw, shape=circle] (w2) at (6.25,-5.5) {};
\node [draw, shape=circle] (w2') at (9.25,-5.5) {};
\node [scale=1.7] at (7.75,-5.5) {\large $\cdots$};
\draw (w2)--(q2)--(w2');
\draw (w2)--(q2')--(w2');
\draw (w2)--(q2'')--(w2');
\node [scale=1.7] at (6.25,-6.1) {\large $x_{1}$};
\node [scale=1.7] at (9.25,-6.1) {\large $x_{77}$};


\node [draw, shape=circle] (q3) at (15.5,-0.5) {};
\node [draw, shape=circle] (q3') at (13.5,-3) {};
\node [draw, shape=circle] (q3'') at (17.5,-3) {};
\draw (q3)--(q3')--(q3'')--(q3);
\node [scale=1.7] at (14.9,-0.3) {\large $q_{3}$};
\node [scale=1.7] at (12.9,-3.2) {\large $q_{3}'$};
\node [scale=1.7] at (18.1,-3.2) {\large $q_{3}''$};
\node [draw, shape=circle] (w3) at (14,-5.5) {};
\node [draw, shape=circle] (w3') at (17,-5.5) {};
\node [scale=1.7] at (15.5,-5.5) {\large $\cdots$};
\draw (w3)--(q3)--(w3');
\draw (w3)--(q3')--(w3');
\draw (w3)--(q3'')--(w3');
\node [scale=1.7] at (14,-6.1) {\large $y_{1}$};
\node [scale=1.7] at (17,-6.1) {\large $y_{77}$};


\node [draw, shape=circle] (q4) at (23.25,-0.5) {};
\node [draw, shape=circle] (q4') at (21.25,-3) {};
\node [draw, shape=circle] (q4'') at (25.25,-3) {};
\draw (q4)--(q4')--(q4'')--(q4);
\node [scale=1.7] at (23.8,-0.3) {\large $q_{4}$};
\node [scale=1.7] at (20.65,-3.2) {\large $q_{4}'$};
\node [scale=1.7] at (25.85,-3.2) {\large $q_{4}''$};
\node [draw, shape=circle] (w4) at (21.75,-5.5) {};
\node [draw, shape=circle] (w4') at (24.75,-5.5) {};
\node [scale=1.7] at (23.25,-5.5) {\large $\cdots$};
\draw (w4)--(q4)--(w4');
\draw (w4)--(q4')--(w4');
\draw (w4)--(q4'')--(w4');
\node [scale=1.7] at (21.75,-6.1) {\large $z_{1}$};
\node [scale=1.7] at (24.75,-6.1) {\large $z_{77}$};


\node [draw, shape=circle] (c1) at (0,5) {};
\node [draw, shape=circle] (c2) at (7.75,5) {};
\node [draw, shape=circle] (c3) at (15.5,5) {};
\node [draw, shape=circle] (c4) at (23.25,5) {};

\node [scale=1.7] at (0,5.6) {\large $c_{1}$};
\node [scale=1.7] at (7.75,5.6) {\large $c_{2}$};
\node [scale=1.7] at (15.5,5.6) {\large $c_{3}$};
\node [scale=1.7] at (23.25,5.6) {\large $c_{4}$};

\draw (q1)--(c1)--(q2);
\draw (q1'')--(c1)--(q2'');
\draw (q3')--(c1)--(q3'');

\draw (q4)--(c2)--(q4'');
\draw (q2')--(c2)--(q2'');
\draw (q1')--(c2)--(q1'');

\draw (q3)--(c3)--(q3'');
\draw (q4)--(c3)--(q4'');
\draw (q2')--(c3)--(q2'');

\draw (q4')--(c4)--(q4'');
\draw (q3')--(c4)--(q3'');
\draw (q1')--(c4)--(q1'');

\end{tikzpicture}
\caption{{\small The graph $G$ when $a=b=4$. Here $U=\{u_{1},u_{2},u_{3},u_{4}\}$, $C_1=\{u_{1},u_{2},u_{3}'\}$, $C_2=\{u_{4},u_{2}',u_{1}'\}$, $C_3=\{u_{3},u_{4},u_{2}'\}$ and $C_4=\{u_{4}',u_{3}',u_{1}'\}$. We observe that $(u_{1},u_{2},u_{3},u_{4}):(\mbox{True},\mbox{False},\mbox{True},\mbox{False})$ is a satisfying truth assignment. Note that $\{q_{1},q_{1}'',q_{2}',q_{2}'',q_{3},q_{3}'',q_{4}',q_{4}''\}$ is a $\gamma_{\times2}(G)$-set.}}\label{Fig1}
\end{figure} 

Let $d_{1}\geq\cdots\geq d_{n}$ be the non-increasing degree sequence of $G$. Note that any vertex $c_{j}$ is adjacent to at least five vertices in $W\cup\{q_{1}'',\cdots,q_{a}''\}$, and this minimum adjacency happens if and only if $C_{j}$ contains both $u_{i}$ and $u_{i}'$ for some $1\leq i\leq a$. Therefore, we have
\begin{equation}\label{Ave}
5b\leq\big{|}[\{c_{1},\cdots,c_{b}\},W\cup\{q_{1}'',\cdots,q_{a}''\}]\big{|}=\sum_{v\in W\cup\{q_{1}'',\cdots,q_{a}''\}}\deg_{\{c_{1},\cdots,c_{b}\}}(v).
\end{equation}

We let $\big{\{}\deg_{\{c_{1},\cdots,c_{b}\}}(v)\mid v\in W\cup\{q_{1}'',\cdots,q_{a}''\}\big{\}}=\{d_{1}',\cdots,d_{3a}'\}$ in which $d_{1}'\geq\cdots\geq d_{3a}'$. Therefore, the inequality (\ref{Ave}) can be written as $5b\leq \sum_{i=1}^{3a}d_{i}'$. Hence, $5b/2\leq \sum_{i=1}^{\lceil3a/2\rceil}d_{i}'$. On the other hand, it is clear from the construction that the terms in $\sum_{i=1}^{2a}d_{i}$ correspond to $2a$ vertices in $W\cup\{q_{1}'',\cdots,q_{a}''\}$. By renaming the vertices, we can write $W\cup\{q_{1}'',\cdots,q_{a}''\}=\{w_{1},\cdots,w_{3a}\}$ in such a way that $d_{i}=\deg(w_{i})$ for each $1\leq i\leq3a$. Therefore,
\begin{equation*}
\begin{array}{lcl}
2a+\sum_{i=1}^{2a}d_{i}=2a+\sum_{i=1}^{2a}\deg(w_{i})&=&2a+\sum_{i=1}^{2a}\deg_{V(H_i)}(w_{i})+\sum_{i=1}^{2a}\deg_{\{c_{1},\cdots,c_{b}\}}(w_{i})\vspace{0.5mm}\\
&\geq&2a+2a(5a^{2}-1)+\sum_{i=1}^{\lceil3a/2\rceil}\deg_{\{c_{1},\cdots,c_{b}\}}(w_{i})\vspace{0.5mm}\\
&=&2a+2a(5a^{2}-1)+\sum_{i=1}^{\lceil3a/2\rceil}d_{i}'\\
&\geq&2a+2a(5a^{2}-1)+5b/2\\
&\geq&2n.
\end{array}
\end{equation*}

On the other hand, since every variable in $W$ belongs to at most five clauses in $C$, it follows that $\Delta(G)\leq5a^{2}+4$. So, $2a-1+\sum_{i=1}^{2a-1}d_{i}\leq2a-1+(2a-1)(5a^{2}+4)<2n$. This implies that $s\ell_{\times2}(G)=2a$.

Now let the above-mentioned instance of $3$-SAT be satisfiable. It follows that the subset of those vertices from $W$ corresponding to $a$ variables assigned TRUE along with their associated vertices $q_{i}''$ forms a double dominating set in $G$ of cardinality $2a$. Therefore, $\gamma_{\times2}(G)\leq2a$. This shows that $\gamma_{\times2}(G)=s\ell_{\times2}(G)=2a$. 

Conversely, if $\gamma_{\times2}(G)=s\ell_{\times2}(G)$, then it must happen that $\gamma_{\times2}(G)=2a$. Let $S$ be a $\gamma_{\times2}(G)$-set. Note that each graph $H_{i}$ must have at least two vertices in $S$ so that all vertices in $V(H_{i})$ can be double dominated by $S$. Since $\gamma_{\times2}(G)=|S|=2a$, it follows that $|V(H_{i})\cap S|=2$ for all $1\leq i\leq a$. Taking into account the edges with one end point in $\{c_{1},\cdots,c_{b}\}$, we may assume that all vertices $q_{i}''$ and just one of the vertices $q_{i}$ and $q_{i}'$, for each $1\leq i\leq a$, belong to $S$. We now create a satisfying truth assignment for $C$ by assigning the value TRUE to those variables $u_{i},u_{j}'\in U\cup U'$ for which $q_{i},q_{j}'\in S\cap W$. This completes the proof.
\end{proof}

For various reasons, the small values of $k$ (especially $k\in\{1,2\}$) regarding $\gamma_{\times k}$ attracts more attention from the experts in domination theory rather than the large ones. In particular, this parameter cannot be defined for some important families of graphs like trees when $k>2$; many results for the case $k\in\{1,2\}$ can be generalized to the general case $k$; one may obtain stronger results for the small values of $k$ rather than the large ones. That is why we would rather emphasis on the case $k=2$ in this paper (note that the case $k=1$ leads to the usual domination in graphs). Notwithstanding this, some interesting papers treated this topic from the general point of view. For instance, Liao and Chang \cite{lc} proved that the decision problem $k$-TUPLE DOMINATING SET (associated with $\gamma_{\times k}$) is NP-complete even for split graphs and for bipartite graphs. They also posed the question ``what are the complexities of the $k$-tuple domination for other subclasses of perfect graphs?" 

Here we prove that this decision problem remains NP-complete even for a very restricted subfamily of comparability graphs, that is, comparability graphs of diameter two. To this aim, we first need to recall that the {\em corona product} $G\odot H$ of graphs $G$ (with $V(G)=\{v_{1},\cdots,v_{n}\}$) and $H$ is obtained from the disjoint union of $G$ and $n$ disjoint copies of $H$, say $H_1,\ldots,H_{n}$, such that for all $i\in \{1,\dots,n\}$, the vertex $v_i\in V(G)$ is adjacent to every vertex of $H_i$.

\begin{theorem}\label{cor}
For any integer $k\geq2$, the $k$-TUPLE DOMINATING SET problem is NP-complete even when restricted to comparability graphs of diameter two.
\end{theorem}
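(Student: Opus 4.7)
The plan is a polynomial-time reduction from an NP-complete problem; membership in NP is routine, since for any candidate $S\subseteq V(G)$ one can check $|N[v]\cap S|\geq k$ at every vertex in polynomial time.

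For the hardness part, I would reduce from the DOMINATING SET problem on a class where it is already known to be NP-complete (for instance, bipartite graphs of diameter two, or an appropriate subclass of comparability graphs; alternatively, one could start from the $k$-TUPLE DOMINATING SET problem on split or bipartite graphs established by Liao and Chang). Given an input graph $G$ with vertex set $\{v_1,\dots,v_n\}$, the candidate construction uses the corona product introduced just before the statement: form $G\odot K_{k-1}$ by attaching a $(k-1)$-clique $H_i$ to each $v_i$, and then augment with a universal gadget (a single vertex, or a small set of vertices, joined to every other vertex) whose role is to bring the diameter down to two while remaining compatible with a transitive orientation of the whole graph. Correctness would be checked in four steps: (a) exhibit a transitive orientation of $G'$ extending a fixed transitive orientation of $G$, with corona edges and universal-gadget edges oriented consistently, and verify triangle-by-triangle transitivity; (b) observe that any two non-adjacent vertices share a neighbor in the universal gadget, giving diameter two; (c) show that for every clique $H_i$ the closed neighborhood in $G'$ of any $u\in V(H_i)$ has size just barely at least $k$, so that $k$-tuple domination at $u$ forces all but at most one vertex of $V(H_i)\cup\{v_i\}$ into any $k$-tuple dominating set; and (d) combine the forced contributions from all the $H_i$ with the $k$-tuple domination constraint at the vertices of $V(G)$ to obtain an equation $\gamma_{\times k}(G')=\gamma(G)+f(n,k)$ for an explicit $f$, reducing the decision problem on $G$ to the one on $G'$.

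The main obstacle is balancing the structural requirements: the universal gadget needed to enforce diameter two tends to provide ``free'' coverage that can collapse $\gamma_{\times k}(G')$ below the intended target and destroy the correspondence with $\gamma(G)$, while its interaction with the corona copies $H_i$ also has to remain consistent with some transitive orientation. Getting the forced contribution from each $H_i$ to be exactly $k-1$ vertices per original vertex, neither more nor less, and simultaneously keeping the graph comparability of diameter two, is the most delicate technical step; the rest of the argument is a relatively mechanical check once the gadget is correctly calibrated.
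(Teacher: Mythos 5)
Your proposal is a plan rather than a proof, and the construction it sketches does not work: the obstacle you flag at the end is fatal, not merely delicate. Concretely, in $G\odot K_{k-1}$ each pendant-clique vertex $h\in V(H_i)$ has $|N[h]|=k$, so \emph{every} vertex of $V(H_i)\cup\{v_i\}$ is forced into any $k$-tuple dominating set and $\gamma_{\times k}(G\odot K_{k-1})=nk$ independently of $\gamma(G)$. Adding a universal gadget vertex $w$ does not repair this: now $|N[h]|=k+1$, so at least $k$ of the $k+1$ vertices of $V(H_i)\cup\{v_i,w\}$ lie in $S$, and therefore $|N[v_i]\cap S|\geq k$ is automatically satisfied for every $v_i\in V(G)$ without using a single edge of $G$. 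One checks that $S=\{w\}\cup\bigcup_i(\mbox{any }k-1\mbox{ vertices of }V(H_i)\cup\{v_i\})$ is already a $k$-tuple dominating set, so $\gamma_{\times k}(G')\leq n(k-1)+1$ regardless of $\gamma(G)$. In either variant the target identity $\gamma_{\times k}(G')=\gamma(G)+f(n,k)$ fails and the reduction collapses; you have not supplied the calibration you yourself identify as the crux.

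The missing idea is that no per-vertex gadget is needed at all. The paper starts from DOMINATING SET on comparability graphs, proves the general corona identity $\gamma_{\times k}(G\odot H)=|V(G)|\big(\gamma_{\times(k-1)}(H)+1\big)$, and then specializes to $G=K_1$: adding a single universal vertex satisfies $\gamma_{\times k}(K_1\odot H)=\gamma_{\times(k-1)}(H)+1$, i.e., it lowers the multiplicity of the domination requirement by exactly one everywhere at the cost of one vertex. Iterating $k-1$ times (equivalently, joining $H$ to a clique on $k-1$ universal vertices) yields $H_k$ with $\gamma_{\times k}(H_k)=\gamma(H)+k-1$, and the universal vertices simultaneously give $\mathrm{diam}(H_k)\leq2$ and extend any transitive orientation of $H$ (orient all new arcs away from each newly added vertex). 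This single gadget achieves all three goals --- the arithmetic link to $\gamma(H)$, diameter two, and comparability --- that your construction tries, and fails, to balance against each other.
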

\begin{proof}
The problem clearly belongs to NP since checking that a given set is indeed a $k$-tuple dominating set of cardinality at most $j$ can be done in polynomial time.

We set $j=r+k-1$. Let $V(H)=\{h_{1},\cdots,h_{|V(H)|}\}$ and $V(H_{i})=\{h^{i}_{1},\cdots,h^{i}_{|V(H)|}\}$ for each $1\leq i\leq|V(G)|$. Let $S$ be a $\gamma_{\times k}(G\odot H)$-set. We set $S_{i}=S\cap(V(H_{i})\cup\{v_{i}\})$ for each $1\leq i\leq|V(G)|$. Suppose first that $v_{i}\notin S_{i}$. It follows that $S_{i}$ is a $k$-tuple dominating set of $H_{i}$. On the other hand, it turns out that $S_{i}\setminus\{h^{i}_{j}\}$ is a $(k-1)$-tuple dominating set of $H_{i}$, where $h^{i}_{j}$ is any vertex of $S_{i}$. Therefore, $|S_i|\geq \gamma_{\times(k-1)}(H_{i})+1$. Suppose now that $v_{i}\in S_{i}$. In such a situation, it is readily observed that $S_{i}\setminus\{v_{i}\}$ is a $(k-1)$-tuple dominating set of $H_{i}$. Therefore, we have again $|S_i|\geq \gamma_{\times(k-1)}(H_{i})+1$. Consequently, $\gamma_{\times k}(G\odot H)=|S|=\sum_{i=1}^{|V(G)|}|S_i|\geq|V(G)|(\gamma_{\times(k-1)}(H)+1)$. 

Conversely, let $A$ be a $\gamma_{\times(k-1)}(H)$-set. Clearly, $A_{i}=\{h^{i}_{j}\in V(H_{i})\mid h_{j}\in A\}$ is a $\gamma_{\times(k-1)}(H_{i})$-set for every $1\leq i\leq|V(G)|$. By the definition, we can easily check that $A'=\cup_{i=1}^{|V(G)|}A_{i}\cup V(G)$ is a $k$-tuple dominating set in $G\odot H$ of cardinality $|V(G)|(\gamma_{\times(k-1)}(H)+1)$. This results in the exact formula 
\begin{equation}\label{cor-tuple1}
\gamma_{\times k}(G\odot H)=|V(G)|(\gamma_{\times(k-1)}(H)+1)
\end{equation}
for any graphs $G$ and $H$. In particular, we have $\gamma_{\times k}(K_{1}\odot H)=\gamma_{\times(k-1)}(H)+1$ for any graph $H$. We now define the sequence $\{H_{1},H_{2},\cdots\}$ by $H_{1}=H$, $H_{t}=K_{1}\odot H_{t-1}$ for each $t\geq2$. With this in mind and by using the equality (\ref{cor-tuple1}) for $G=K_1$, we get 
\begin{equation*}\label{cor-tuple2}
\gamma_{\times k}(H_{k})=\gamma_{\times(k-1)}(H_{k-1})+1=\cdots=\gamma_{\times1}(H_1)+k-1.
\end{equation*}
Thus, $\gamma_{\times k}(H_{k})=\gamma(H)+k-1$ for every graph $H$. Our reduction is now completed by taking into account that $\gamma_{\times k}(H_{k})\leq j$ if and only if $\gamma(H)\leq r$. 

On the other hand, let $H=H_1$ be a comparability graph. Hence, it has a transitive orientation $D=D_1$. Let $V(H_2)=V(H)\cup\{x\}=V(H_1)\cup\{x\}$. It is easy to check that the orientation $D_{1}$ of $H_1$ along with the set of new arcs $\{(x,v)\mid v\in V(H)=V(H_1)\}$ give us a transitive orientation of $H_2$. Therefore, $H_{2}$ is a comparability graph. Iterating this process, we have that all $H_{1},\cdots,H_{k}$ are comparability graphs. 

Because the DOMINATING SET problem (associated with the usual domination number) is NP-complete for comparability graphs (see \cite{cs} or \cite{Dew}) and since $\mbox{diam}(H_{k})\leq2$, we deduce that the $k$-TUPLE DOMINATING SET is NP-complete for comparability graphs of diameter two. This completes the proof.
\end{proof}


\section{Bounding the double domination number}

Let $H$ be a bipartite graph with partite sets $X$ and $Y$ such that every vertex in $X$ has degree two. We make a matching $M$ whose edges join some vertices in $Y$ which have neighbors in $X$. We also join at least one end-vertex to the vertices in $Y$ which are not saturated by $M$. Let $G$ be the constructed graph and let $\Omega$ be the family of such graphs $G$ (see Figure \ref{fig-omega} in the case of a tree $T$).

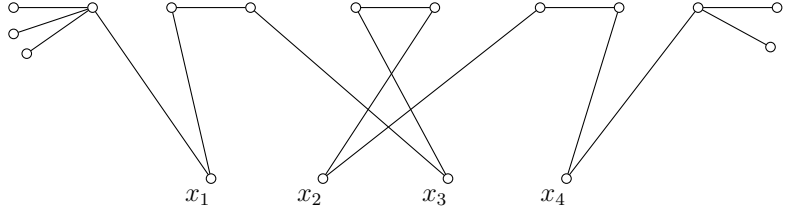
\begin{figure}[ht]
 \centering
\begin{tikzpicture}[scale=.35, transform shape]
\node [draw, shape=circle] (v0) at  (6,14) {};
\node [draw, shape=circle] (v0') at  (3,14) {};
\node [draw, shape=circle] (v0'') at  (5.75,12.5) {};
\node [draw, shape=circle] (v1) at  (0,14) {};
\node [draw, shape=circle] (v1') at  (-3,14) {};
\node [draw, shape=circle] (v2) at  (-7,14) {};
\node [draw, shape=circle] (v2') at  (-10,14) {};
\node [draw, shape=circle] (v3) at  (-14,14) {};
\node [draw, shape=circle] (v3') at  (-17,14) {};
\node [draw, shape=circle] (v4) at  (-20,14) {};
\node [draw, shape=circle] (v4') at  (-23,14) {};

\node [draw, shape=circle] (x4) at  (-2,7.5) {};
\node [draw, shape=circle] (x3) at  (-6.5,7.5) {};
\node [draw, shape=circle] (x2) at  (-11.25,7.5) {};
\node [draw, shape=circle] (x1) at  (-15.5,7.5) {};

\node [draw, shape=circle] (w1) at  (-22.5,12.25) {};
\node [draw, shape=circle] (w2) at  (-23,13) {};

\draw (v0)--(v0');
\draw (v0')--(v0'');
\draw (v1)--(v1');
\draw (v2)--(v2');
\draw (v3)--(v3');
\draw (v4)--(v4');
\draw (v3')--(x1)--(v4);
\draw (v2)--(x2)--(v1');
\draw (v1)--(x4)--(v0');
\draw (v3)--(x3)--(v2');
\draw (w1)--(v4)--(w2);

\node [scale=2.4] at (-2.5,6.8) {\large $x_{4}$};
\node [scale=2.4] at (-7,6.8) {\large $x_{3}$};
\node [scale=2.4] at (-11.75,6.8) {\large $x_{2}$};
\node [scale=2.4] at (-16,6.8) {\large $x_{1}$};

\end{tikzpicture}
  \caption{{\small A tree $T\in \Omega$ with $X=\{x_{1},x_{2},x_{3},x_{4}\}$ and $|M|=3$.}}\label{fig-omega}
\end{figure}

\begin{theorem}\label{general}
Let $G$ be a graph of order $n$ and size $m$ with no isolated vertices. Let $e$ and $p$ be the number of end-vertices and penultimate vertices in $G$, respectively. Then,
$$\gamma_{\times2}(G)\geq\frac{4n-2m+e-p}{3}.$$
Furthermore, the equality holds if and only if $G\in \Omega$.
\end{theorem}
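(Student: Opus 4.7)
Let $D$ be a $\gamma_{\times2}(G)$-set. The plan is to bound the edge count $m$ from below by partitioning the edges according to their endpoints in $D$ and $\overline{D}:=V(G)\setminus D$, and to refine the standard bound by separating the edges incident to leaves. First I would record the forced structure: every leaf $\ell$ satisfies $|N[\ell]\cap D|=|\{\ell,v\}\cap D|\geq 2$, where $v$ is its unique neighbor, so both the leaf and its support vertex must lie in $D$. Hence, writing $L$ for the set of $e$ end-vertices and $P$ for the set of $p$ penultimate vertices, we have $L\cup P\subseteq D$. Set $Q=D\setminus(L\cup P)$, so $|D|=e+p+|Q|$.

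Next, let $m_D$ (resp.\ $m_{\overline{D}}$, $m_{D\overline{D}}$) be the number of edges with both ends in $D$ (resp.\ both in $\overline{D}$, one in each). Each vertex of $\overline{D}$ has at least two neighbors in $D$, so $m_{D\overline{D}}\geq 2(n-|D|)$. For $m_D$, I would split edges within $D$ into those incident to a leaf, which are exactly $e$ in number (each leaf contributing its unique edge, to a vertex of $P\subseteq D$), and those lying inside $D\setminus L$. For any $v\in Q$ we have $|N[v]\cap D|\geq 2$ and $v$ has no leaf neighbor, so $v$ has at least one neighbor in $D\setminus L$; summing gives $2\,m'_{D}\geq |Q|$, where $m'_{D}$ counts edges inside $D\setminus L$. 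Hence $m_D\geq e+(|D|-e-p)/2=(|D|+e-p)/2$.

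Putting the two estimates together,
\begin{equation*}
m\geq m_D+m_{D\overline{D}}\geq \frac{|D|+e-p}{2}+2(n-|D|)=2n-\frac{3|D|}{2}+\frac{e-p}{2},
\end{equation*}
which rearranges to $|D|\geq(4n-2m+e-p)/3$, proving the bound.

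For the equality statement, I would chase the tightness conditions in the chain above: equality forces $m_{\overline{D}}=0$ (so $\overline{D}$ is independent), each vertex of $\overline{D}$ has exactly two neighbors in $D$, each vertex of $Q$ has exactly one neighbor in $D\setminus L$, and each vertex of $P$ has no neighbor in $D\setminus L$. Setting $X:=\overline{D}$ and $Y:=D\setminus L=P\cup Q$, the bipartite subgraph $G[X,Y]$ has all $X$-vertices of degree two, the $Y\!$--$Y$ edges form a perfect matching on $Q$ (and no $Y$-vertex of $P$ is incident with such an edge), and the leaves are attached precisely to the $Y$-vertices outside that matching. This is exactly the recipe defining $\Omega$. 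Conversely, for $G\in\Omega$ constructed from $(H,X,Y,M,L)$, I would verify that $D:=Y\cup L$ is a double dominating set (leaves and their supports are both in $D$; matched $Y$-pairs double-dominate each other; each $x\in X$ has its two $Y$-neighbors in $D$) and then check by direct substitution using $n=|X|+|Y|+|L|$, $m=2|X|+|M|+|L|$, $e=|L|$, $p=|Y|-2|M|$ that $|D|=|Y|+|L|=(4n-2m+e-p)/3$, so the bound is attained.

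The main obstacle will be the converse direction of the equality characterization: translating the four tightness conditions into the clean combinatorial description of $\Omega$, and making sure no additional edge (for instance a second $Y\!$--$Y$ edge at a matched vertex, or an $X$-vertex of degree different from two) can sneak in without violating one of the tight inequalities. The forward direction and the arithmetic identity for $G\in\Omega$ are routine once the right parameters are tracked.
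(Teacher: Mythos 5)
Your proposal is correct and follows essentially the same route as the paper's proof: both decompose $m$ into edges inside the double dominating set, edges across, and edges outside, bound each piece using the forced membership of leaves and supports, and then read off the structure of $\Omega$ from the tightness conditions. The only difference is presentational (you phrase the bound on edges inside $D\setminus L$ via a degree sum over $Q$, which is what the paper does implicitly), so no further changes are needed.
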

\begin{proof}
Let $A$ be a $\gamma_{\times2}(G)$-set. The definition of a double dominating set implies that every vertex not in $A$ is adjacent to at least two vertices in $A$, and every vertex in $A$ is adjacent to at least one vertex in $A$. Moreover, all end-vertices and penultimate vertices belong to $A$, necessarily. Therefore,
\begin{equation}\label{Ine1}
\begin{array}{lcl}
m=|E(G[A])|+|[A,V(G)\setminus A]|+|E(G[V(G)\setminus A])|\geq e+(|A|-e-p)/2+2(n-|A|).
\end{array}
\end{equation}
Thus, $\gamma_{\times2}(G)=|A|\geq(4n-2m+e-p)/3$.

Suppose first that $G\in \Omega$. Then $V(G)$ is the union of vertices of $|M|$ copies of $P_{2}$, $|Y|-2|M|$ stars and $|X|$ independent vertices. Hence, $n=2|M|+|X|+e+p$ and $m=|M|+2|X|+e$. It is then easily checked that the set $S$ containing the saturated vertices by $M$ along with the vertices of the $|Y|-2|M|$ stars is a double dominating set in $G$. Therefore, $\gamma_{\times2}(G)\leq|S|=2|M|+e+p=(4n-2m+e-p)/3$. This ends up with equality in the lower bound.

Conversely, let the equality hold. So the inequality in (\ref{Ine1}) holds with equality, necessarily. In particular, $|E(G[A])|=e+(|A|-e-p)/2$ implies that the subgraph induced by $A$ is a disjoint union of $P_{2}$-copies and stars $T_{i}$ with partite sets $X_{i}$ and $Y_{i}$ with $|X_{i}|=1$, for which $\cup_{i}Y_{i}$ and $\cup_{i}X_{i}$ are the sets of end-vertices and penultimate vertices of $G$, respectively. Moreover, $|[A,V(G)\setminus A]|=2(n-|A|)$ implies that every vertex in $V(G)\setminus A$ has precisely two neighbors in $A$. Finally, the vertices not in $A$ are independent as $E(G[V(G)\setminus A])$ is empty. It is now easily observed that $V(G)\setminus A$, $A$, the set of edges of $P_2$-copies, $\cup_{i}X_{i}$ and $\cup_{i}Y_{i}$ correspond to $X$, $Y$, $M$, $M$-unsaturated vertices of $Y$ and the end-vertices described in the process of defining $\Omega$, respectively. Thus, $G\in \Omega$.
\end{proof}

Note that Theorem \ref{T4} implies the lower bound given in Theorem \ref{general}. In spite of this, we proved it by a different method so as to give the characterization of graphs attaining the lower bound.

As an immediate consequence of Theorem \ref{general} for trees we have the following result of Chellali in $2006$.

\begin{theorem}\emph{(\cite{c})}\label{T2}
If $T$ is a nontrivial tree of order $n$ with $\ell$ leaves and $s$ support vertices, then $\gamma_{\times2}(T)\geq(2n+\ell-s+2)/3$.
\end{theorem}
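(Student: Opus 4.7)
The plan is to derive this statement as an immediate specialization of Theorem \ref{general}, since for a tree essentially all of the ingredients in the general bound simplify to the quantities appearing in Chellali's formulation.

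First I would record the translation of terminology: in any tree, the end-vertices are precisely the leaves, so $e = \ell$, and the penultimate vertices are precisely the support vertices, so $p = s$. This is already built into the terminology established in the introduction of the paper, so no separate argument is needed.

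Next I would use the defining structural fact about trees, namely that a tree of order $n$ has exactly $m = n - 1$ edges. Substituting $m = n-1$, $e = \ell$, and $p = s$ into the lower bound of Theorem \ref{general} gives
\[
\gamma_{\times 2}(T) \;\geq\; \frac{4n - 2m + e - p}{3} \;=\; \frac{4n - 2(n-1) + \ell - s}{3} \;=\; \frac{2n + \ell - s + 2}{3},
\]
which is exactly the inequality claimed in Theorem \ref{T2}.

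There is no real obstacle here; the entire content of the proof has been absorbed into Theorem \ref{general}. The only points that require any care are (i) checking that a nontrivial tree has no isolated vertices (so that Theorem \ref{general} applies) and (ii) making sure the ``$+2$'' in the numerator matches the ``$-2m$'' contribution once $m$ is replaced by $n-1$, which is a one-line algebraic verification carried out above.
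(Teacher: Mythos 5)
Your proposal is correct and is exactly the paper's route: the paper states Theorem \ref{T2} as an immediate consequence of Theorem \ref{general}, obtained by substituting $m=n-1$, $e=\ell$, $p=s$ into $(4n-2m+e-p)/3$. The algebra checks out, and your remarks about nontriviality and the terminology translation are the only points needing care.
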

Note that the whole of paper \cite{c} is devoted to the bound given in Theorem \ref{T2} and a constructive characterization of all trees attaining it.

In order to characterize all trees for which the equality holds in the bound given in Theorem \ref{T2}, it suffices to restrict the family $\Omega$ to trees. Let us denote the resulting family by $\Omega'$. In what follows, we describe a typical member of $\Omega'$. We begin with $a$ copies of $P_{2}$ and $s$ copies of stars $T_{i}$ with partite sets $X_{i}$ and $Y_{i}$ such that $|X_{i}|=1$. We then add $r=a+s-1$ new vertices and join each of them to precisely two vertices in the $P_{2}$-copies and $X_{i}$'s such that all vertices of $P_{2}$-copies are incident with at least one of them. Note that $m=a+\sum_{i}|Y_{i}|+2r=n-1$ guarantees that the resulting graph is a tree \big{(}see Figure \ref{fig-omega} for $(a,s,|Y_{1}|,|Y_{2}|)=(3,2,3,2)$\big{)}.

Hajian and Jafari Rad in 2019 generalized Theorem \ref{T2} to connected graphs as follows. Note that they used the words ``leaf" and ``support vertex" instead of ``end-vertex" and ``penultimate vertex", respectively.

\begin{theorem}\emph{(\cite{hj})}\label{T3}
If $G$ is a connected graph of order $n\geq2$ with $k\geq0$ cycles, $e$ end-vertices and $p$ penultimate vertices, then $\gamma_{\times2}(G)\geq(2n+e-p+2)/3-2k/3$.
\end{theorem}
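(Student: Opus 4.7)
The plan is to derive Theorem \ref{T3} as an immediate corollary of the more general Theorem \ref{general}. Since Theorem \ref{general} already applies to every graph with no isolated vertices, and in particular to every connected graph of order $n\geq 2$, it delivers the bound $\gamma_{\times2}(G)\geq (4n-2m+e-p)/3$ for free. The work therefore reduces to rewriting $m$ in terms of $n$ and $k$ using the connectedness hypothesis.

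The key graph-theoretic input is the standard formula for the cyclomatic number: for a connected graph with $n$ vertices, $m$ edges and $k$ independent cycles, one has $k=m-n+1$, i.e.\ $m=n-1+k$. This holds because any spanning tree contributes exactly $n-1$ edges, and each of the remaining $m-(n-1)$ edges creates exactly one fundamental cycle. Substituting this expression into the bound of Theorem \ref{general} gives
\[
\gamma_{\times2}(G)\geq\frac{4n-2(n-1+k)+e-p}{3}=\frac{2n+e-p+2}{3}-\frac{2k}{3},
\]
which is precisely the inequality asserted in Theorem \ref{T3}.

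The only point that deserves an explicit sentence is fixing the interpretation of ``$k$ cycles'' as the cyclomatic number $m-n+1$; this matches the convention used in the paper of Hajian and Jafari Rad that the statement is quoted from. I do not foresee a genuine obstacle, since in the connected setting Theorem \ref{T3} is strictly weaker than Theorem \ref{general}, and the characterization of extremal graphs already carried out in the proof of Theorem \ref{general} (the family $\Omega$, and its tree restriction $\Omega'$) supplies the corresponding equality discussion with no additional combinatorial work.
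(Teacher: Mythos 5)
Your derivation follows exactly the route the paper takes: Theorem \ref{T3} is obtained as a corollary of Theorem \ref{general} by trading the size $m$ for the cycle count. The one point where you diverge from the paper is the identity $k=m-n+1$. The paper does not assume this; it takes a spanning tree $T$, sets $k'=m-n+1$ for the number of non-tree edges, and then shows via the injection $xy\mapsto C_{xy}$ (each non-tree edge determines a distinct fundamental cycle) that $k'\leq k$, where $k$ is read as the number of cycles of $G$. That reading matters: if $k$ counts all cycles of $G$ as subgraphs, then $k=m-n+1$ is simply false in general (for $K_4$ one has $m-n+1=3$ but seven cycles), so your substitution step would need to be replaced by the inequality $m-n+1\leq k$, giving
$$\frac{4n-2m+e-p}{3}=\frac{2n+e-p+2}{3}-\frac{2(m-n+1)}{3}\geq\frac{2n+e-p+2}{3}-\frac{2k}{3},$$
which is a one-line repair and does not affect the conclusion. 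If instead $k$ is taken to be the cyclomatic number, your argument is complete as written. So the proof is essentially correct and essentially the paper's; just make the $k'\leq k$ step explicit rather than asserting equality, since the quoted source's convention is the weaker hypothesis. Your closing remark about the equality case is not needed for the statement (which is only an inequality), and in any case equality in Theorem \ref{T3} would additionally force $k=k'$, so it is not quite ``for free'' from the family $\Omega$.
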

They also characterized the family of all graphs achieving the equality in the above bound by extending the characterization given in \cite{c}. 

Let $G$ be a graph of size $m$ given in Theorem \ref{T3} and let $T$ be a spanning tree of it. Clearly, all end-vertices and penultimate vertices of $G$ belong to $V(T)$. Let $k'$ be the number of edges of $G$ which are not in $T$. So, $m=n-1+k'$. On the other hand, it is well-known that for such an edge $xy$, $T+xy$ contains a unique cycle $C_{xy}$ containing $xy$. Moreover, for any two such edges $xy$ and $x'y'$, $C_{xy}=C_{x'y'}$ implies that $xy=x'y'$. This shows that $xy\longrightarrow C_{xy}$ is a one-to-one function. Therefore, $k'\leq k$. We then have 
$$\frac{4n-2m+e-p}{3}=\frac{2n+e-p+2}{3}-\frac{2k'}{3}\geq \frac{2n+e-p+2}{3}-\frac{2k}{3}.$$
In fact, Theorem \ref{general} is an improvement of Theorem \ref{T3}.

\begin{theorem}\label{T4}
For any graph $G$ of order $n$ and size $m$ with $e$ end-vertices, $p$ penultimate vertices and $\delta(G)\geq1$,
$$\gamma_{\times2}(G)\geq s\ell_{\times2}(G)\geq(4n-2m+e-p)/3.$$
Furthermore, $s\ell_{\times2}(T)=(4n-2m+e-p)/3$ if and only if $n+m+e-p\equiv0$ \emph{(}mod $3$\emph{)} and one of the following conditions holds:\vspace{0.5mm}\\ 
\emph{(}$i$\emph{)} If $\delta(G)\geq2$, then $d_{q}=2$, where $q=(4n-2m+3)/3$.\vspace{0.5mm}\\ 
\emph{(}$ii$\emph{)} If $\delta(G)=1$, then \big{(}$n=2m-e+p$\big{)} or \big{(}$n<2m-e+p$ and $d_{q}=2$, where $q=(4n-2m-2e-p+3)/3$\big{)}.  
\end{theorem}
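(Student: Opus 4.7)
The plan is to derive both inequalities by an edge-counting argument and then read off the equality characterization by tracing where each step is tight.

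For $\gamma_{\times 2}(G)\geq s\ell_{\times 2}(G)$, I take a $\gamma_{\times 2}(G)$-set $A$. Every end-vertex and every penultimate vertex lies in $A$. Inside $G[A]$ the $e$ end-edges are forced, and each of the $|A|-e-p$ non-end non-penultimate vertices of $A$ has at least one $A$-neighbor, which is automatically non-end (no neighbor of a non-penultimate vertex is an end-vertex); this contributes at least $(|A|-e-p)/2$ further edges, so $|E(G[A])|\geq (|A|+e-p)/2$. Combined with $|[A,V(G)\setminus A]|\geq 2(n-|A|)$, this yields $\sum_{v\in A}\deg(v)\geq 2n-|A|+e-p$. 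Subtracting the $e$ contributed by the end-vertices of $A$ and using that the sum of degrees over the $|A|-e$ non-end vertices of $A$ is at most $d_1+\cdots+d_{|A|-e}$ gives $|A|+d_1+\cdots+d_{|A|-e}\geq 2n-p$, so $|A|\geq s\ell_{\times 2}(G)$ by the definition of $s\ell_{\times 2}$.

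For $s\ell_{\times 2}(G)\geq (4n-2m+e-p)/3$, let $t=s\ell_{\times 2}(G)$ and rewrite the defining inequality via $\sum_i d_i=2m$ as $d_{t-e+1}+\cdots+d_n\leq t+2m-2n+p$. The last $e$ positions of the non-increasing sequence are end-vertices (degree $1$), and the remaining non-end positions in the tail have degree $\geq 2$, so for $t\leq n-1$ the tail is at least $2(n-t)+e$; combining gives $3t\geq 4n-2m+e-p$. A short degree count shows $s\ell_{\times 2}(G)\leq n$ holds automatically when $\delta\geq 1$, and the boundary case $t=n$ forces exactly $n=2m-e+p$, meeting the bound with equality.

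For the equality $s\ell_{\times 2}(G)=t_0:=(4n-2m+e-p)/3$, integrality of $t_0$ gives $n+m+e-p\equiv 0\pmod 3$. In case (i), $e=p=0$ and tightness of the tail bound forces $d_i=2$ for every $i\geq t_0+1$, so $d_q=2$ with $q=t_0+1=(4n-2m+3)/3$. Conversely, $d_q=2$ together with monotonicity yields $t_0+d_1+\cdots+d_{t_0}=2n$ while $t_0-1+d_1+\cdots+d_{t_0-1}=2n-1-d_{t_0}<2n$, confirming $s\ell_{\times 2}(G)=t_0$. In case (ii), if $n=2m-e+p$ then $t_0=n$ and the defining inequality is met with equality at $t=n$; otherwise $n<2m-e+p$ forces $t_0<n$, and tightness in the tail inequality forces $d_i=2$ for $t_0-e+1\leq i\leq n-e$, giving $d_q=2$ with $q=t_0-e+1=(4n-2m-2e-p+3)/3$. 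The converse runs the same computation backwards.

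The main technical care is (a) correctly accounting for the last $e$ positions of the degree sequence being fixed at $1$, which is what splits the two cases, and (b) in case (ii) justifying that the single condition $d_q=2$ propagates to $d_q=d_{q+1}=\cdots=d_{n-e}=2$, which uses monotonicity together with $\deg(v)\geq 2$ for non-end vertices.
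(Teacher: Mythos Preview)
Your proposal is correct and follows essentially the same route as the paper. The first inequality is proved by the same edge/degree count (the paper organizes it as a bound on $\sum_{v\in A\setminus L}\deg(v)$, you organize it as bounds on $|E(G[A])|$ and $|[A,V\setminus A]|$, but these are the same computation), the second inequality uses the identical tail-of-the-degree-sequence argument, and the equality analysis is the same case split on $\delta$. One small imprecision: in the boundary case $t=n$ of the second inequality you write that this ``forces exactly $n=2m-e+p$'', but from the defining inequality at $t=n$ you only get $n\leq 2m-e+p$; this is still enough to conclude $n\geq (4n-2m+e-p)/3$, so the bound holds, and the exact equality $n=2m-e+p$ is only needed (and only follows) in the equality characterization, where you do handle it correctly. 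The paper sidesteps this by invoking the earlier Theorem~\ref{general} for the $t=n$ case.
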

\begin{proof}
Let $A$ be a $\gamma_{\times2}(G)$-set. Let $L$ and $P=\{u_{1},\cdots,u_{p}\}$ be the sets of end-vertices and penultimate vertices of $G$, respectively. It is immediate from the definition that $P\cup L\subseteq A$. Moreover, each vertex in $A$ has at least one neighbor in $A$, and every vertex in $V(G)\setminus A$ has at least two neighbors in $A$. Therefore,
\begin{equation}\label{E1}
\begin{array}{lcl}
|A|+\sum_{i=1}^{|A|-e}d_{i}&\geq&|A|+\sum_{v\in A\setminus L}\deg(v)\\
&=&|A|+\sum_{v\in A\setminus L}|N(v)\cap A|+\sum_{v\in A\setminus L}|N(v)\cap (V(G)\setminus A)|\\
&=&|A|+\sum_{i=1}^{p}|N(u_{i})\cap A|+\sum_{v\in A\setminus(P\cup L)}|N(v)\cap A|\\
&+&\sum_{v\in A\setminus L}|N(v)\cap (V(G)\setminus A)|\\
&\geq&|A|+e+(|A|-e-p)+2(n-|A|)\\
&=&2n-p. 
\end{array}
\end{equation}
Thus, $\gamma_{\times2}(G)=|A|\geq s\ell_{\times2}(G)$.

Now let $t=s\ell_{\times2}(G)$. If $t=n$, then $\gamma_{\times2}(G)=s\ell_{\times2}(G)$ and so the lower bound follows from Theorem \ref{general}. Therefore, we may assume that $t<n$. By the definition, we have
$$2n-p\leq t+d_{1}+\cdots+d_{t-e}=t+2m-(\sum_{i=t-e+1}^{n-e}d_{i}+\sum_{i=n-e+1}^{n}d_{i})\leq t+2m-\big{(}e+2(n-t)\big{)}.$$
Therefore, $s\ell_{\times2}(T)=t\geq(4n-2m+e-p)/3$.

Suppose now that $s\ell_{\times2}(G)=(4n-2m+e-p)/3$. Clearly, $n+m+e-p\equiv0$ (mod $3$). We distinguish two cases depending on the minimum degree of $G$.\vspace{1mm}
 
\textit{Case 1.} $\delta(G)\geq2$. Then, $s\ell_{\times2}(G)=(4n-2m)/3$. If $s\ell_{\times2}(G)=n$, then $n=2m$ that contradicts the fact that $\delta(G)\geq2$. Therefore, $s\ell_{\times2}(G)<n$. Suppose to the contrary that $d_{q}>2$, where $q=(4n-2m+3)/3$. Let $t=s\ell_{\times2}(G)$. We then have
$$2n\leq t+d_{1}+\cdots+d_{t}=t+2m-\sum_{i=t+1}^{n}d_{i}<t+2m-2(n-t).$$
So, $t=s\ell_{\times2}(G)>(4n-2m)/3$. This is a contradiction. Therefore, $d_{q}=2$.\vspace{1mm}

\textit{Case 2.} $\delta(G)=1$. Suppose first that $s\ell_{\times2}(G)=n$. Therefore, $n=2m-e+p$ by the equality in the lower bound. We now assume that $t=s\ell_{\times2}(G)<n$. This implies that $n<2m-e+p$ and that $q=t-e+1<n-e+1$. Therefore, $d_{q}\geq2$. If $d_{q}>2$, then
$$2n-p\leq t+d_{1}+\cdots+d_{t-e}=t+2m-(\sum_{i=t-e+1}^{n-e}d_{i}+\sum_{i=n-e+1}^{n}d_{i})<t+2m-2(n-t)-e$$
implies that $t>(4n-2m+e-p)/3$, which is a contradiction. Therefore, $d_{q}=2$.

Conversely, suppose that $n+m+e-p\equiv0$ (mod $3$) and one of the conditions ($i$) and ($ii$) holds. Again, we consider two cases depending on $\delta(G)$.\vspace{1mm}

\textit{Case 3.} $\delta(G)\geq2$. It is readily seen that $q-1=(4n-2m)/3<n$. We have 
$$q-1+d_{1}+\cdots+d_{q-1}=q-1+2m-\sum_{i=q}^{n}d_{i}=q-1+2m-2(n-q+1)=2n.$$
Therefore, $s\ell_{\times2}(G)\leq q-1$ and so $s\ell_{\times2}(G)=(4n-2m)/3$.\vspace{1mm}

\textit{Case 4.} $\delta(G)=1$. If $n=2m-e+p$, then $(4n-2m+e-p)/3=n=s\ell_{\times2}(G)$ and we are done. So, we may assume that $n<2m-e+p$. This implies that $q\leq n-e$. We now have
\begin{equation*}
\begin{array}{lcl}
q+e-1+d_{1}+\cdots+d_{q-1}&=&q+e-1+2m-(\sum_{i=q}^{n-e}d_{i}+\sum_{i=n-e+1}^{n}d_{i})\\
&=&q+e-1+2m-2(n-e-q+1)-e=2n-p.
\end{array}
\end{equation*}
Therefore, $s\ell_{\times2}(G)\leq q+e-1=(4n-2m+e-p)/3$. This completes the proof.
\end{proof}

\begin{rem}
Note that the gap between $s\ell_{\times2}(T)$ and $(2n+\ell-s+2)/3$ can be arbitrarily large in the case of nontrivial trees $T$ of order $n$ with $\ell$ leaves and $s$ support vertices. In fact, we claim that for any integer $b\geq1$, there exists a tree $T$ for which $s\ell_{\times2}(T)-(2n+\ell-s+2)/3=b$. To see this, let $T$ be obtained from the path $P_{6b+4}$ by joining a leaf to each vertex of $P_{6b+4}$. Then $n=12b+8$ and $\ell=s=6b+4$. Then, $(2n+\ell-s+2)/3=8b+6$. On the other hand, 
$$9b+6+d_{1}+\cdots+d_{9b+6-\ell}=9b+6+d_{1}+\cdots+d_{3b+2}=9b+6+3(3b+2)=18b+12=2n-s$$
shows that $s\ell_{\times2}(T)=9b+6=(2n+\ell-s+2)/3+b$, as desired.
\end{rem}


\section{On $k$-tuple domatic partitioning of a graph}

In this section, we give an upper bound on the $k$-tuple domatic number of graphs in terms of the order, size and $k$-tuple domination number. In order to characterize the extremal graphs which attain the bound, we introduce the family $\Psi$ of graphs as follows. Let $H_{1},\cdots,H_{r}$ be $(k-1)$-regular graphs of the same order. Let $G$ be obtained from $H_{1},\cdots,H_{r}$ by joining each vertex of $H_{i}$ to precisely $k$ vertices of $H_{j}$ for all $1\leq i\neq j\leq r$. Now let $\Psi$ be the family of all such graphs $G$.
 
\begin{theorem}\label{Domatic}
For any graph $G$ of order $n$ and size $m$ with $\delta(G)\geq k-1$,
$$d_{\times k}(G)\leq \frac{1}{2}+\sqrt{\frac{1}{4}+\frac{2m-(k-1)n}{k\gamma_{\times k}(G)}}.$$
Moreover, the equality holds if and only if $G\in \Psi$.
\end{theorem}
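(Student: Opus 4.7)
The plan is to fix a maximum $k$-tuple domatic partition and carry out a double count of the edges of $G$, then solve the resulting quadratic inequality in $d_{\times k}(G)$. Setting $d=d_{\times k}(G)$ and taking a $d_{\times k}(G)$-partition $\{V_1,\ldots,V_d\}$, the edges split as $m=\sum_i m_i+\sum_{i<j} m_{ij}$, where $m_i=|E(G[V_i])|$ and $m_{ij}=|[V_i,V_j]|$. Because every $V_i$ is a $k$-tuple dominating set, each $v\in V_i$ has at least $k-1$ neighbors inside $V_i$ (from $|N[v]\cap V_i|\geq k$ and the contribution of $v$ itself) and at least $k$ neighbors in every other part. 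Summing the first inequality gives $2\sum_i m_i\geq(k-1)n$. For the crossing edges, summing $|N(v)\cap V_j|\geq k$ over $v\in V_i$ yields $m_{ij}\geq k|V_i|\geq k\gamma_{\times k}(G)$, hence $\sum_{i<j}m_{ij}\geq\binom{d}{2}k\gamma_{\times k}(G)$. Combining, $2m\geq(k-1)n+d(d-1)\,k\gamma_{\times k}(G)$, and solving this quadratic in $d$ delivers the stated bound.

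For the equality characterization, tracing the inequalities shows that equality forces each $G[V_i]$ to be $(k-1)$-regular, forces $|V_i|=\gamma_{\times k}(G)$ for all $i$ (so the parts have a common size), and forces each vertex of $V_i$ to have exactly $k$ neighbors in every $V_j$ with $j\neq i$ -- precisely the defining data of $\Psi$. Conversely, for $G\in\Psi$ built from $r$ copies of $(k-1)$-regular graphs of a common order $s$, the natural partition witnesses $d_{\times k}(G)\geq r$; since $G$ is $(rk-1)$-regular with $n=rs$, the Harary--Haynes-type estimate $\gamma_{\times k}(G)\geq kn/(\Delta+1)$ together with the obvious $\gamma_{\times k}(G)\leq s$ pins down $\gamma_{\times k}(G)=s$, and a direct substitution shows the right-hand side of the formula evaluates to $\tfrac12+\sqrt{(r-\tfrac12)^2}=r$, so the general bound gives $d_{\times k}(G)\leq r$ and equality is attained.

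The main obstacle will be the converse direction: although the natural partition trivially yields $d_{\times k}(G)\geq r$, verifying that the formula actually evaluates to $r$ on $G\in\Psi$ requires the identity $\gamma_{\times k}(G)=s$, and the lower bound $\gamma_{\times k}(G)\geq s$ needs an external input (the regular-graph estimate) rather than anything directly visible from the partition itself.
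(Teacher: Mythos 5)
Your proposal is correct and follows essentially the same route as the paper: the same edge double count over a maximum $k$-tuple domatic partition, the same quadratic inequality in $d_{\times k}(G)$, and the same tracing of tight inequalities to extract the structure of $\Psi$. The only (harmless) deviation is in verifying $\gamma_{\times k}(G)=s$ for $G\in\Psi$: you invoke the general bound $\gamma_{\times k}(G)\geq kn/(\Delta+1)$ for the $(rk-1)$-regular graph $G$, whereas the paper runs the equivalent double count directly between a minimum $k$-tuple dominating set and one part $H_1$; both yield the same conclusion.
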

\begin{proof}
For the sake of convenience, we write $d_{\times k}=d_{\times k}(G)$ and $\gamma_{\times k}=\gamma_{\times k}(G)$. Let $\mathbb{S}=\{S_{1},\cdots,S_{d_{\times k}}\}$ be a $d_{\times k}$-partition (that is, a $k$-tuple domatic partition of $G$ of cardinality $d_{\times k}$). Without loss of generality, we may assume that $|S_{1}|\leq\cdots\leq|S_{d_{\times k}}|$. By the definition, every vertex in $S_i$ has at least $k-1$ neighbors in $S_i$ as well as $k$ neighbors in $S_j$ for each $1\leq i\neq j\leq d_{\times k}$. Thus, 
\begin{equation}\label{Inequ}
\begin{array}{lcl}
m&=&\sum_{1\leq i\leq d_{\times k}}|[S_{i},S_{i}]|+\sum_{1\leq i<j\leq d_{\times k}}|[S_{i},S_{j}]|\vspace{1mm}\\
&\geq& \sum_{1\leq i\leq d_{\times k}}(k-1)|S_{i}|/2+\sum_{1\leq i<j\leq d_{\times k}}|[S_{i},S_{j}]|\vspace{1mm}\\
&\geq& \dfrac{(k-1)n}{2}+\sum_{i=1}^{d_{\times k}}k|S_{i}|(d_{\times k}-i)\vspace{1mm}\\
&\geq& \dfrac{(k-1)n}{2}+k|S_1|\sum_{i=1}^{d_{\times k}}(d_{\times k}-i)\vspace{1mm}\\
&\geq& \dfrac{(k-1)n}{2}+\big{(}\dfrac{d_{\times k}(d_{\times k}-1)}{2}\big{)}k\gamma_{\times k}.
\end{array}
\end{equation}
This leads to $-k\gamma_{\times k}d_{\times k}^{2}+k\gamma_{\times k}d_{\times k}+2m-(k-1)n\geq0$. Solving this inequality for $d_{\times k}$, we get $d_{\times k}\leq\big{(}1+\sqrt{1+4(2m-(k-1)n)/k\gamma_{\times k}}\big{)}\big{/}2$.

Suppose now that $G\in \Psi$. It is clear that $\{H_{1},\cdots,H_{r}\}$ is a $k$-tuple domatic partition of $G$. In particular, this shows that $\gamma_{\times k}\leq|H_{1}|$. Let $D$ be a $\gamma_{\times k}$-set. We set 
$$A=\{(d,h)\mid d\in D, h\in H_{1}\ \mbox{and}\ h\in N[d]\}.$$ 
Notice that every vertex $h\in H_1$ is $k$-tuple dominated by $D$. This shows that $|A|\geq k|H_1|$. On the other hand, $|N[d]\cap H_{1}|\leq k$ for every $d\in D$. Therefore, $|A|\leq k|D|$. Together these two inequalities imply that $|H_1|\leq|D|=\gamma_{\times k}$, and hence $\gamma_{\times k}=|H_{1}|$. Therefore, $\{H_{1},\cdots,H_{r}\}$ is $k$-tuple domatic partition of $G$ into $\gamma_{\times k}$-sets. In particular, $n=r\gamma_{\times k}$ and $r=d_{\times k}$. Furthermore, $m=(kr-1)n/2$ because $G$ is a $(kr-1)$-regular graph. It is now easy to compute that $\big{(}1+\sqrt{1+4(2m-(k-1)n)/k\gamma_{\times k}}\big{)}\big{/}2=r=d_{\times k}$. So, we have the equality in the upper bound. 

Conversely, let the upper bound hold with equality. Therefore, all inequalities in (\ref{Inequ}) hold with equality, necessarily. In particular, the first two resulting equalities imply that\vspace{0.5mm} 

$(i)$ each vertex in $S_{i}$ has precisely $(k-1)$ neighbors in $S_{i}$, and\vspace{0.5mm} 

$(ii)$ each vertex in $S_{i}$ ($1\leq i\leq d_{\times k}-1$) has precisely $k$ neighbors in $S_{j}$ with $j>i$.\vspace{0.5mm}\\
Therefore, $G[S_{i}]$ is a $(k-1)$-regular graph for each $1\leq i\leq d_{\times k}$, and $|[S_{i},S_{j}]|=k|S_i|$ for all $1\leq i<j\leq d_{\times k}$. While the last two resulting equalities show that $|S_{1}|=\cdots=|S_{d_{\times k}}|=\gamma_{\times k}$. With this in mind, by the equality $|[S_{i},S_{j}]|=k|S_i|$ for all $1\leq i<j\leq d_{\times k}$ along with the fact that each $S_{i}$ is a $k$-tuple dominating set in $G$, we deduce that each vertex in $S_{i}$ has precisely $k$ neighbors in each $S_{j}$ for all $1\leq i\neq j\leq d_{\times k}$. It is now easy to see that $d_{\times k}$, $G[S_1],\cdots,G[S_{d_{\times k}}]$ and $k$ have the same role as $r$, $H_{1},\cdots,H_{r}$ and $k$ have in the process of introducing of $\Psi$, respectively. Therefore, $G\in \Psi$. This completes the proof. 
\end{proof}

Notice that for the special case when $k=1$, we have the usual domination number $\gamma(G)$ and domatic number $d(G)$. In such a situation, the upper bound in Theorem \ref{Domatic} and its associated characterization become much simpler.

\begin{corollary}
Let $G$ be a graph of size $m$. Then
$$d(G)\leq \frac{1}{2}+\sqrt{\frac{1}{4}+\frac{2m}{\gamma(G)}},$$
with equality if and only if $G$ is an $r$-partite graph with partite sets $H_{1},\cdots,H_{r}$ in which the edge set of $G[H_{i}\cup H_{j}]$ is a perfect matching for each $1\leq i\neq j\leq r$. 
\end{corollary}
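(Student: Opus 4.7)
The plan is to derive the corollary as an immediate specialization of Theorem~\ref{Domatic} to the case $k=1$. Since a $1$-tuple dominating set is exactly an ordinary dominating set and a $1$-tuple domatic partition is exactly a domatic partition, we have $\gamma_{\times 1}(G)=\gamma(G)$ and $d_{\times 1}(G)=d(G)$. Substituting $k=1$ into the inequality of Theorem~\ref{Domatic} makes the term $(k-1)n$ vanish and reduces $k\gamma_{\times k}(G)$ to $\gamma(G)$, so the stated bound follows at once. The hypothesis $\delta(G)\geq k-1=0$ is vacuous, hence the bound applies to every graph.

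For the equality characterization, I would unpack what membership in the family $\Psi$ means when $k=1$. A graph in $\Psi$ is built from $(k-1)$-regular graphs $H_{1},\dots,H_{r}$ of the same order by joining every vertex of $H_{i}$ to exactly $k$ vertices of $H_{j}$ whenever $i\neq j$. With $k=1$, each $H_{i}$ is $0$-regular, i.e.\ an independent set, so $G$ is $r$-partite with parts $H_{1},\dots,H_{r}$. The joining condition forces each vertex of $H_{i}$ to have exactly one neighbor in $H_{j}$; applying the same condition with the roles of $i$ and $j$ swapped, each vertex of $H_{j}$ likewise has exactly one neighbor in $H_{i}$. Combined with $|H_{i}|=|H_{j}|$, this is equivalent to saying that the edges of $G[H_{i}\cup H_{j}]$ form a perfect matching between $H_{i}$ and $H_{j}$. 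Conversely, an $r$-partite graph in which every pair of parts is joined by a perfect matching automatically has equal-sized independent parts and the required $1$-regular bipartite structure between parts, placing it in $\Psi$ for $k=1$.

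The only delicate point is confirming that the equal-size condition on the parts, which is imposed explicitly in the definition of $\Psi$ but left implicit in the corollary, is correctly captured by the phrase \emph{perfect matching} (which forces equal sides). Beyond this bit of bookkeeping, the corollary is a direct transcription of the equality case of Theorem~\ref{Domatic} to $k=1$, so no further obstacle is anticipated.
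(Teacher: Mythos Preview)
Your proposal is correct and matches the paper's approach exactly: the paper presents this corollary as an immediate specialization of Theorem~\ref{Domatic} to $k=1$, with no separate proof, noting only that the bound and characterization ``become much simpler'' in this case. Your careful unpacking of what $\Psi$ means when $k=1$ (in particular, that $0$-regularity forces independent parts and the one-neighbor condition between parts yields perfect matchings) supplies the bookkeeping the paper leaves implicit.
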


It is well-known that $\delta(G)+1$ is an upper bound on the domatic number of any graph $G$, and that a graph $G$ is called ({\em domatically}) {\em full} if $d(G)=\delta(G)+1$ (see \cite{ch}). Cockayne and Hedetniemi in \cite{ch} posed the open problem of characterizing all full graphs. In what follows, we give a constructive characterization of all such graphs. To do so, we introduced the family $\Theta$ as follows. Let $H_{1},\cdots,H_{r}$ be any graphs such that at least one of them, say $H_{i}$, has an isolated vertex $v$. Let $G$ be obtained from the disjoint union $H_{1}+\cdots+H_{r}$ by\vspace{0.5mm}\\
$(i)$ joining $v$ to precisely one vertex in each $H_{j}$ with $j\neq i$, and\vspace{0.5mm}\\
$(ii)$ adding some edges among the vertices in $\cup_{t=1}^{r}V(H_{t})\setminus\{v\}$ such that every vertex in $H_{t}$ has at least one neighbor in $H_{t'}$ for each $1\leq t\neq t'\leq r$.\vspace{0.5mm}\\
Let $\Theta$ be the family of all such graphs $G$.

\begin{theorem}\label{full}
A graph $G$ is full if and only if $G\in \Theta$.
\end{theorem}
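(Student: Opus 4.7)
The plan is to use the classical inequality $d(G)\le \delta(G)+1$ (valid because any vertex of minimum degree lies in at most $\delta(G)+1$ distinct parts of a domatic partition, as each part containing no such vertex must intersect its closed neighborhood). Then in both directions of the equivalence I will simply have to track how the extremal case $d(G)=\delta(G)+1$ forces the structure described by $\Theta$.

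For the forward direction, assume $G\in\Theta$ is built from $H_{1},\dots,H_{r}$, with an isolated vertex $v\in V(H_{i})$. I would first argue that $\{V(H_{1}),\dots,V(H_{r})\}$ is a domatic partition: by condition (ii) any $u\in V(H_{t'})$ with $t'\ne t$ already has a neighbor in $V(H_{t})$, while for the sole problematic vertex $v$ (the only one not automatically covered by (ii)) condition (i) supplies a neighbor in every other part. Hence $d(G)\ge r$. Next I will compute $\deg(v)=r-1$, because $v$ has no neighbor in $H_{i}$ and exactly one neighbor in each of the $r-1$ other parts added in (i). Thus $\delta(G)\le r-1$, and combining with $d(G)\le\delta(G)+1\le r\le d(G)$ yields equality throughout, so $G$ is full.

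For the reverse direction, suppose $G$ is full, set $r=\delta(G)+1$, and let $\{S_{1},\dots,S_{r}\}$ be a domatic partition of size $r$. Pick a vertex $v$ of minimum degree; say $v\in S_{i}$. Because each $S_{j}$ with $j\ne i$ dominates $v$, the vertex $v$ must have at least one neighbor in each such $S_{j}$, giving at least $r-1$ neighbors outside $S_{i}$. But $\deg(v)=r-1$, so these inequalities are tight: $v$ has exactly one neighbor in each $S_{j}$, $j\ne i$, and none in $S_{i}$. Setting $H_{t}:=G[S_{t}]$ for every $t$, I obtain $v$ as an isolated vertex of $H_{i}$; the edges incident to $v$ realize step (i) of the construction of $\Theta$, and the remaining cross-edges, which do not touch $v$, lie among $\bigcup_{t}V(H_{t})\setminus\{v\}$ and guarantee condition (ii) since for any $u\in V(H_{t})$ with $u\ne v$ and any $t'\ne t$, the domination of $u$ by $S_{t'}$ provides a neighbor in $V(H_{t'})$. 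Therefore $G\in\Theta$.

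The argument is mostly book-keeping and the only genuine subtlety is isolating $v$ in both directions: in the forward direction one must notice that the edges of (ii) are explicitly forbidden to touch $v$, so the $r-1$ cross-neighbors of $v$ come only from (i), thereby forcing $\deg(v)=r-1$; in the reverse direction one must squeeze every inequality at $v$ simultaneously to pin $v$ as isolated inside its own part $H_{i}$. Once this pivotal vertex is handled, the rest of the structure emerges directly from the definition of a domatic partition of size $\delta(G)+1$.
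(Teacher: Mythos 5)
Your proof is correct and follows essentially the same route as the paper's: both directions hinge on the classical bound $d(G)\le\delta(G)+1$, on exhibiting $\{V(H_1),\dots,V(H_r)\}$ as a domatic partition in the forward direction, and on squeezing the inequalities at a minimum-degree vertex $v$ to force it to be isolated in its own part in the reverse direction. The only (harmless) cosmetic difference is that you deduce $\delta(G)\le r-1$ and close the chain $d(G)\le\delta(G)+1\le r\le d(G)$, whereas the paper asserts $\deg(v)=\delta(G)=r-1$ directly.
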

\begin{proof}
Suppose first that $G\in \Theta$. Clearly, $\deg(v)=\delta(G)=r-1$. By the construction, we observe that $V(H_{t})$ is a dominating set in $G$ for each $1\leq t\leq r$. So, $\mathbb{H}=\{V(H_{1}),\cdots,V(H_{r})\}$ is a domatic partition of $G$. On the other hand, $|\mathbb{H}|=r=\delta(G)+1$ implies that $\mathbb{H}$ is a $d(G)$-set, and so $\delta(G)+1=r=d(G)$.

Suppose now that $d(G)=\delta(G)+1$. Let $\mathbb{Q}=\{Q_{1},\cdots,Q_{d(G)}\}$ be a $d(G)$-partition. Let $v\in Q_{i}$ be a vertex of minimum degree in $G$. Since every set in $\mathbb{Q}$ is a dominating set, $v$ is adjacent to at least one vertex in $Q_{j}$ for each $1\leq i\neq j\leq d(G)$. Therefore, $\deg(v)\geq d(G)-1$. With this in mind, the equality $d(G)=\delta(G)+1$ implies that $v$ is an isolated vertex of $G[Q_{i}]$ having precisely one neighbor in $Q_{j}$ for each $1\leq i\neq j\leq d(G)$. Furthermore, since $\mathbb{Q}$ is a domatic partition of $G$, every vertex different from $v$ in $Q_{t}$ has at least one neighbor in $Q_{t'}$ for each $1\leq t\neq t'\leq d(G)$. It is now easily seen that $d(G)$ and $G[Q_{1}],\cdots,G[Q_{d(G)}]$ correspond to $r$ and $H_{1},\cdots,H_{r}$ in the description of the family $\Theta$. Thus, $G\in \Theta$.   
\end{proof}

The characterization given in Theorem \ref{full} would be much simpler in the case of regular graphs. In fact, if we restrict the family $\Theta$ to regular graphs, the following result will be immediate from Theorem \ref{full}. This result, by a different expression, was proved by Zelinka in \cite{Zelinka}. 

\begin{corollary}
Let $G$ be an $r$-regular graph. Then, $G$ is full if and only if it is an $(r+1)$-partite graph with partite sets $H_{1},\cdots,H_{r+1}$ in which the edges of $G[H_{i}\cup H_{j}]$ form a perfect matching for each $1\leq i\neq j\leq r+1$.
\end{corollary}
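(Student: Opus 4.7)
The plan is to derive this characterization as a direct specialization of Theorem \ref{full} to the regular case, using the degree constraint to force the extra structure. First I would prove the forward direction. Assume $G$ is $r$-regular and full, so $d(G)=\delta(G)+1=r+1$. By Theorem \ref{full}, $G\in\Theta$, so we may write $V(G)=V(H_1)\cup\cdots\cup V(H_{r+1})$ with a distinguished vertex $v\in V(H_i)$ which is isolated in $G[V(H_i)]$ and has exactly one neighbor in each $V(H_j)$ with $j\neq i$.

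Next I would use $r$-regularity to rigidify the whole partition. For any vertex $u\in V(H_t)$, the defining property of $\Theta$ guarantees at least one neighbor in each of the $r$ other parts $V(H_{t'})$, contributing at least $r$ neighbors in total. Since $\deg_G(u)=r$, this forces exactly one neighbor in each $V(H_{t'})$ and no neighbor inside $V(H_t)$. Consequently every $H_t$ is an independent set, and the bipartite graph $G[V(H_t)\cup V(H_{t'})]$ is $1$-regular on both sides, hence a perfect matching; in particular all parts have the same cardinality, so $\{H_1,\ldots,H_{r+1}\}$ is a valid vertex partition of the claimed form.

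For the converse, suppose $G$ is the $(r+1)$-partite graph described in the statement. Each vertex sits in some $H_i$ and is matched to exactly one vertex in each of the $r$ remaining parts, so $G$ is $r$-regular, i.e., $\delta(G)=r$. Moreover, every part $V(H_i)$ is a dominating set, since any vertex in $V(H_j)$ ($j\neq i$) has its unique matching partner inside $V(H_i)$. Thus $\{V(H_1),\ldots,V(H_{r+1})\}$ is a domatic partition, and combining $d(G)\geq r+1=\delta(G)+1$ with the universal upper bound $d(G)\leq\delta(G)+1$ yields $d(G)=\delta(G)+1$, so $G$ is full.

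I do not anticipate a real obstacle here: Theorem \ref{full} has already done the structural work, and regularity is strong enough that the ``some edges'' clause in the definition of $\Theta$ collapses to ``exactly one edge per pair of parts per vertex.'' The only point worth writing carefully is the equal-cardinality observation, which I would extract directly from the $1$-regularity of each bipartite piece $G[V(H_t)\cup V(H_{t'})]$ rather than arguing about it separately.
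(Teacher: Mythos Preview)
Your proposal is correct and follows exactly the route the paper indicates: the paper does not give a separate proof but simply states that restricting the family $\Theta$ of Theorem~\ref{full} to regular graphs makes the corollary immediate, and your argument spells out precisely that specialization. The key step you identify---that $r$-regularity forces each vertex to have exactly one neighbor in every other part and none in its own---is the intended collapse of the ``some edges'' clause in $\Theta$, so nothing is missing.
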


\begin{flushleft}
\textbf{{\large Acknowledgments}}\vspace{0.5mm}\\
This work was jointly supported by the Iran National Science Foundation (INSF) and Alzarha University with grant number 99022321.
\end{flushleft}


\end{document}